\newtheorem{theorem}{Theorem}[section]
\newtheorem{proposition}[theorem]{Proposition}
\newtheorem{lemma}[theorem]{Lemma}
\theoremstyle{definition}
\newtheorem{definition}[theorem]{Definition}
\newtheorem{remark}[theorem]{Remark}
\newcommand{\R}{\mathbb{R}}
\newcommand{\V}{\mathbb{V}}
\newcommand{\G}{\mathcal{G}}
\newcommand{\LL}{\mathcal{L}}
\newcommand{\defeq}{\mathrel{\mathop:}=}
\newcommand{\om}{\omega}
\title{Sparse FEONet: A Low-Cost, Memory-Efficient Operator Network via Finite-Element Local Sparsity for Parametric PDEs}
\author{Seungchan Ko\thanks{Department of Mathematics, Inha University, 22212, Incheon, Republic of Korea. Email: \tt{scko@inha.ac.kr}},
~Jiyeon Kim\thanks{Department of Mathematics, Ajou University, 16499, Suwon, Republic of Korea. Email: \tt{gkim0201@ajou.ac.kr}},
~Dongwook Shin\thanks{Department of Mathematics, Ajou University, 16499, Suwon, Republic of Korea. Email: \tt{dws@ajou.ac.kr}}}
\date{}
\begin{document}

\maketitle

\begin{abstract}
    In this paper, we study the finite element operator network (FEONet), an operator-learning method for parametric problems, originally introduced in J. Y. Lee, S. Ko, and Y. Hong, Finite Element Operator Network for Solving Elliptic-Type Parametric PDEs, SIAM J. Sci. Comput., 47(2), C501–C528, 2025. FEONet realizes the parameter-to-solution map on a finite element space and admits a training procedure that does not require training data, while exhibiting high accuracy and robustness across a broad class of problems. However, its computational cost increases and accuracy may deteriorate as the number of elements grows, posing notable challenges for large-scale problems. In this paper, we propose a new sparse network architecture motivated by the structure of finite elements to address this issue. The key idea of our method is to exploit the local properties of finite element basis functions: only neighboring components interact with one another, and this locality is reflected in the design of a sparse neural network. We establish theoretical results demonstrating that the sparse architecture can approximate the target operator effectively and provide a stability analysis ensuring reliable training and prediction.
    Throughout extensive numerical experiments, we show that the proposed sparse network achieves substantial improvements in computational cost and efficiency while maintaining comparable accuracy.
\end{abstract}

\noindent{\textbf{Keywords:} Operator learning, deep learning, finite element methods, sparse networks, computational efficiency, universal approximation, stability}

\smallskip

\noindent{\textbf{AMS Classification:} 65M60, 65N30, 68T20, 68U07}

\section{Introduction}
The application of machine learning (ML) techniques to partial differential equations (PDEs) has seen remarkable progress in recent years, presenting novel strategies to address persistent difficulties in scientific computing \cite{lagaris1998artificial, PINN001, yu2018deep, ainsworth2021galerkin}. Within this landscape, operator networks have gained attention as an effective and practical approach owing to their capacity to provide rapid solution predictions once training is completed. In contrast to traditional numerical methods that iteratively compute solutions for each new PDE data (e.g., boundary conditions, initial conditions, and external forcing terms), operator networks learn the underlying solution operator from PDE data to the corresponding solutions, thereby enabling fast solution predictions for varying parameters. This advantage positions operator networks as a compelling paradigm for studying parametric PDEs. Representative contributions in this area include the Deep Operator Network (DeepONet) \cite{lu2021learning} and the Fourier Neural Operator (FNO) \cite{li2021fourier}. While both DeepONet and FNO enable fast prediction of solutions under varying PDE data, some challenges remain when extending their use to real-world scenarios. A primary obstacle lies in the necessity of a large collection of pre-computed training datasets of solutions. This is typically generated through classical numerical methods for PDEs, which becomes particularly burdensome for nonlinear or highly complex systems. To address this limitation, hybrid frameworks such as the Physics-Informed DeepONet (PIDeepONet) \cite{wang2021learning} and the Physics-Informed Neural Operator (PINO) \cite{li2021physics} have been proposed. These approaches integrate the benefits of Physics-Informed Neural Networks (PINNs) and operator learning by incorporating governing equations directly into the loss functions of neural operators. Nevertheless, such models still encounter difficulties, including reduced accuracy for intricate geometries, challenges in handling stiff problems, and considerable generalization errors caused by limited training data \cite{jagtap2020extended, costabal2024delta, lee2023hyperdeeponet}. In addition, the use of neural networks to represent solution spaces complicates the enforcement of diverse boundary conditions, which in turn affects the reliability of the resulting solutions \cite{choi2024spectral}.

To overcome these challenges, an unsupervised operator network founded on classical finite element methods (FEMs), referred to as the Finite Element Operator Network (FEONet), has been proposed in \cite{FEONet}. 
Within the FEM framework, the numerical solution $u_h(x)$ with a mesh size $h>0$ is expressed as a linear combination of nodal basis functions $\phi_k(x)$, which are piecewise polynomials defined over meshes. Formally, this can be written as
\begin{equation}\label{FEM_sol}
u_h(x) = \sum_{k=1}^{N_h} \alpha_k \phi_k(x), \quad x \in \Omega,
\end{equation}
where $N_h$ is the number of basis functions depending on $h>0$. Extending this formulation, instead of solving a linear algebraic system to determine $\{\alpha_k\}^{N_h}_{k=1}$ in \eqref{FEM_sol}, FEONet predicts the coefficients $\{\widehat{\alpha}_k\}_{k=1}^{N_h}$ using neural networks to construct the approximate solution for the given PDE. 
In this setting, compared to DeepONet \cite{lu2021learning}, FEONet relies only on a branch network without a trunk network, which significantly reduces the number of parameters. The loss function of FEONet, motivated by classical FEM, is designed using the residual of the Galerkin approximation, which guarantees both accurate PDE solutions and strict enforcement of boundary conditions. Due to the inherent capability of FEM in incorporating boundary conditions, the solutions generated by FEONet precisely satisfy these constraints. 

A distinguishing feature of FEONet is its ability to solve parametric PDEs without requiring paired input-output training data, marking a significant advance in computational efficiency and general applicability. To be more specific, FEONet predicts PDE solutions under varying inputs such as initial and boundary conditions, external forcings, and variable coefficients. As shown in \cite{FEONet}, it demonstrates flexibility in managing diverse PDE instances across complex domains, while avoiding reliance on pre-generated data.   One further advantage of FEONet lies in its versatility, namely, its ability to directly incorporate the techniques from classical numerical analysis. For instance, consider a singularly perturbed problem whose solution develops boundary or interior layers \cite{schlichting2016boundary,batchelor2000introduction}. Such problems are classical yet notoriously difficult in numerical analysis, and many specialized numerical methods have been proposed to treat them \cite{FEM1,FEM2}. One such approach is the enriched finite element method, which employs corrector basis functions derived from asymptotic analysis to capture the stiff behavior of the solution and yield improved FEM approximations \cite{BLA_1,BLA_2,BLA_3}. This idea extends naturally to the FEONet framework: by incorporating a boundary-layer element into the finite element space via an appropriate corrector basis function, one can construct an enriched FEONet basis that effectively captures sharp variations. Further details and a rigorous convergence analysis of FEONet are given in \cite{FEONet, efeonet}.

While FEONet demonstrates flexibility in delivering accurate and efficient solutions across a wide range of scenarios, there still remain some computational challenges. One of the main problems is the issue of computational cost. As previously noted, within the FEONet framework, the dimension of the neural network output coincides with the number of elements employed in the solution representation. For relatively simple problems, we can obtain accurate solutions via FEM with a modest number of bases; however, for complex problems, more refined computations often necessitate a substantially larger number of elements. In such cases, we observed that FEONet’s performance deteriorates as the number of bases grows. A primary cause of this limitation lies in the approximation and generalization issues that arise when the finite element coefficients are represented through neural network approximation. As proved in \cite{feonet_anal}, as the mesh size $h$ decreases (equivalently, as the number of elements increases), the error of FEONet initially decreases, but beyond a certain threshold it begins to increase again. This phenomenon directly corresponds to the key discussion in \cite{feonet_anal}, where the main cause of this phenomenon was identified. 
A second major limitation arises from the scalability issue, which is commonly encountered in large-scale deep-learning tasks. While the errors are mitigated thanks to the theory developed in \cite{feonet_anal}, the computational cost remains substantial for domains with a large number of elements, leading to additional challenges in terms of computation and memory. Problems requiring a large number of elements inevitably suffer from considerable computational overhead, manifested in substantially increased computational cost and impeded training efficiency. This constitutes a crucial challenge from a practical perspective, and it must be addressed if FEONet is to be deployed more broadly in real-world scenarios in a manner comparable to FEM. 

This discussion raises a fundamental question about how to handle computational cost efficiently in operator learning methods, which typically require a large amount of computation. One important idea is to impose a suitable sparsity structure on the neural networks used. If one can design an appropriate sparsity pattern by taking into account the architecture, loss function, and training procedure of a given operator network, then it should be possible to achieve a significant improvement in computational efficiency while maintaining accuracy. We refer to such approaches collectively as Sparse Neural Operator netWorks (SNOW). For widely used architectures such as DeepONet and the FNO, developing SNOW-type methods to effectively control computational cost is, in our view, one of the important future directions in operator learning. 

In this perspective, the main contributions of this paper are as follows.
First, we propose a SNOW approach for FEONet that substantially improves computational efficiency, even when a large number of elements is required.
As we show later, our method is inspired by the observation in FEM that only neighboring elements exert strong interactions, whereas the influence between distant elements is comparatively minor.
Building upon this motivation, we introduce a new strategy using sparse neural networks that enables effective FEONet computations with a significantly reduced number of parameters.
Next, we theoretically establish that the proposed sparse architecture has sufficient approximation capacity for the target operator and admits stable training. In particular, we prove the universal approximation property of the proposed sparse network and provide the stability analysis that guarantees the robust training and solution prediction.
Lastly, we validate the efficiency of the proposed method through a series of numerical experiments. Across a variety of benchmark scenarios, we demonstrate that the proposed approach achieves a substantial reduction in the number of trainable parameters and memory usage while maintaining high accuracy. We also show that the proposed sparse structure outperforms randomly constructed sparse networks in terms of accuracy. In addition, in regimes where very fine meshes are required due to the sharp-transition or high-frequency nature of the solution, the original FEONet becomes practically untrainable, whereas the proposed sparse architecture provides a fast and accurate solution prediction. 

The remainder of the paper is structured as follows. Section \ref{sec:prelim} reviews the preliminaries required for the development of our approach. Section \ref{sec:method} presents the proposed methodology in detail. Section \ref{sec:theory} provides a theoretical analysis supporting the validity of the method, while Section \ref{sec:exp} demonstrates the efficiency of the proposed method through extensive experiments. Finally, Section \ref{sec:conclusion} offers concluding remarks and discusses future research directions.

\section{Preliminaries}\label{sec:prelim}
The objective of this section is to introduce FEONet, which forms the baseline of our proposed sparse methodology. Since FEONet is built upon the classical FEM, we provide a brief overview of the setting of FEM, and subsequently provide a detailed description of FEONet. As a model problem, we shall consider the general second-order linear elliptic PDE of the form
\begin{equation}\label{main_eq}
\begin{aligned}
    -\,{\mathrm{div}}\,(\boldsymbol{a}(x)\nabla u)+\boldsymbol{b}(x)\cdot\nabla u+c(x)u&=f(x)\quad{\mathrm{in}}\,\,D, \\
u(x)&=0\quad\hskip+14pt  {\mathrm{on}}\,\,\partial D.
\end{aligned}
\end{equation}
Here, for simplicity, let us assume the following: 
\begin{equation}\label{coef_cond_1}
\boldsymbol{a}\in L^{\infty}(D)^{d\times d},\quad \boldsymbol{b}\in W^{1,\infty}(D)^d,\quad c\in L^{\infty}(D),\quad f\in H^{-1}(D).
\end{equation}
For the diffusion tensor $\boldsymbol{a}=(a_{ij})$, we further assume uniform ellipticity; that is, there exists a constant $\tilde{a}>0$ such that
\begin{equation}\label{coef_cond_2}
    \sum_{i,j=1}^d a_{ij}(x)\xi_i\xi_j \;\geq\; \tilde{a}\sum_{i=1}^d \xi_i^2,\quad \forall\, \xi=(\xi_1,\ldots,\xi_d)\in\mathbb{R}^d,\; x\in\overline{D}.
\end{equation}
To ensure the well-posedness of the problem \eqref{main_eq}, we additionally assume that
\begin{equation}\label{coef_cond_3}
    c(x)-\tfrac{1}{2}\,\mathrm{div}\,\boldsymbol{b}(x)\;\geq\;0,\quad x\in\overline{D}.
\end{equation}
The weak formulation of the problem is then given as follows: we seek $u\in H^1_0(D)$ such that
\[
B[u,v] := \int_{D}\boldsymbol{a}(x)\nabla u\cdot\nabla v\,\mathrm{d}x 
+ \int_{D}\boldsymbol{b}(x)\cdot\nabla u\, v\,\mathrm{d}x
+ \int_{D}c(x)uv\,\mathrm{d}x= \int_D f(x)v\,\mathrm{d}x =:\ell(v),
\]
for arbitrary $v\in H^1_0(D)$. Under the assumptions \eqref{coef_cond_1}-\eqref{coef_cond_3}, there exist constants $c_0, c_1, c_2>0$ such that
\begin{equation}\label{lax_cond}
B[v,v]\geq c_0\|v\|^2_{H^1(D)}, 
\,\, |B[u,v]|\leq c_1\|u\|_{H^1(D)}\|v\|_{H^1(D)},
\,\, |\ell(v)|\leq c_2\|v\|_{H^1(D)}.
\end{equation}
Therefore, the existence and uniqueness of the weak solution follow directly from the classical Lax--Milgram theorem (see, e.g., \cite{BS_book}). For brevity, the above discussion has been restricted to the linear case. However, as will be explained in detail later, FEONet employs the residual of the variational formulation as the loss functional, and therefore, it can be directly applied to nonlinear problems as well. In fact, in the numerical experiments section, we will also solve some nonlinear equations using the proposed method.

\subsection{Finite element methods}
We first provide a brief overview of the classical FEM. As a preliminary step, we introduce the finite element space that will serve as the foundation throughout the paper.  
Let $\G_h$ denote a shape-regular partition of the physical domain $\overline{D}$, where $h_E$ represents the diameter of an element $E\in\G_h$, and $h=\max_{E\in\G_h}h_E$.  
We assume that there exists a constant $\gamma>0$, independent of $h$, such that
\[
\max_{E\in\G_h}\frac{h_E}{\rho_E}\leq \gamma,
\]
where $\rho_E$ is the supremum of the diameters of inscribed balls in $E\in\G_h$. For a given partition $\G_h$, the corresponding finite element space is defined as
\[
\V_h=\V(\G_h)\;\defeq\;\bigl\{V\in C(\overline{D}):\,V|_E\in\hat{\mathbb{P}}_{\V},\; E\in \G_h,\; V|_{\partial D}=0\bigr\},
\]
where $\hat{\mathbb{P}}_{\V}\subset W^{1,\infty}(\hat{E})$ is a finite-dimensional subspace.  
We further assume that $\V_h$ admits a finite, locally supported basis. Specifically, for each $h>0$, there exists $N_h\in\mathbb{N}$ such that $\V_h=\mathrm{span}\{\phi_1,\ldots,\phi_{N_h}\}$.
For a basis function $\phi_i$, there exists a patch $\mathcal{P}_i$ such that $\textrm{supp}(\phi_i) \subset \mathcal{P}_i$ and $\phi_i(x) = 0$ for $x\not\in \mathcal{P}_i$. 
If $\mathcal{P}_i$ is the set of elements that contain the node $i$, then $\mathcal{P}_i = \textrm{supp}(\phi_i)$.

We now define the Galerkin approximation: we seek a discrete solution of the form
\begin{equation}\label{basis_exp}
u_h=\sum_{k=1}^{N_h}\alpha_k\phi_k\in \V_h    
\end{equation}
satisfying
\begin{equation}\label{var_for}
B[u_h,v_h]=\ell(v_h)\quad \text{for all } v_h\in\V_h.
\end{equation}
Introducing the matrices $A=(A_{ij})_{1\leq i,j\leq N_h}\in\R^{N_h\times N_h}$ and the vector $F=(F_j)_{1\leq j\leq N_h}\in\R^{N_h}$ defined by
\begin{equation}\label{vec_form}
A_{ij}=B[\phi_i,\phi_j]\quad{\rm{and}}
\quad F_j=\ell(\phi_j),
\end{equation}
the discrete scheme \eqref{var_for} can be reformulated as the linear algebraic system
\begin{equation}\label{LAS}
A\alpha=F,\quad \alpha=(\alpha_k)_{1\leq k\leq N_h}\in\R^{N_h}.
\end{equation}
It follows that $B[\phi_i, \phi_j] = 0$ if $\mathcal{P}_i \cap \mathcal{P}_j$ is of measure zero.
As a result, the linear system \eqref{LAS} has a sparse structure.
By solving this system, we determine the coefficient $\alpha$, which in turn allows us to compute an approximate solution of the given PDE \eqref{main_eq} via the basis expansion \eqref{basis_exp}.

\subsection{Finite element operator networks}

We now introduce the finite element operator network (FEONet), originally proposed in \cite{FEONet}.  
As discussed earlier, the input to FEONet can represent various types of PDE data, such as external forces, variable coefficients, or boundary conditions.  
For clarity, we present a prototype setting in which the input corresponds to an external forcing term, though the framework can be naturally extended to other types of inputs. Given a forcing function $f$, rather than computing the coefficients $\alpha$ through the linear system \eqref{LAS}, FEONet predicts them via deep neural networks. Specifically, the neural network takes as input the forcing term $f$, which is parameterized by a random variable $\om$ defined on the probability space $(\Omega,\mathcal{T},\mathbb{P}_{\Omega})$.  
Typical examples include Gaussian random fields or random forcing functions of the form
\[
f(x,\om)=\om_1\sin(2\pi\om_2 x)+\om_3\cos(2\pi\om_4 x),
\]
with $\om=(\om_1,\om_2,\om_3,\om_4)$ drawn i.i.d. uniformly with $\om_i\in[a_i,b_i]$ for $i=1,2,3,4$. Once the input feature $\om\in\Omega$ passes through the neural network, it outputs coefficients $\{\widehat{\alpha}_i\}_{i=1}^{N_h}$, and the solution prediction is reconstructed as
\begin{equation}\label{sol_recon}
\widehat{u}_h(x,\om)=\sum_{i=1}^{N_h}\widehat{\alpha}_i(\om)\phi_i(x).
\end{equation}

For training, the population loss is defined by the residual of the variational formulation \eqref{var_for}:
\begin{equation}\label{pop_loss}
\LL(\alpha)=\mathbb{E}_{\om\sim\mathbb{P}_{\Omega}}
\left[\sum_{i=1}^{N_h}\big|B[\widehat{u}_h(x,\om),\phi_i(x)]-\ell(\phi_i(x))\big|^2\right]^{1/2}.
\end{equation}
In practice, the empirical loss function is employed, obtained via Monte Carlo sampling of \eqref{pop_loss}:
\begin{equation}\label{emp_loss}
\LL^M(\alpha)=\frac{|\Omega|}{M}\sum_{j=1}^M
\left[\sum_{i=1}^{N_h}\big|B[\widehat{u}_h(x,\om_j),\phi_i(x)]-\ell(\phi_i(x))\big|^2\right]^{1/2},
\end{equation}
where $\{\om_j\}_{j=1}^M$ are i.i.d. samples drawn from $\mathbb{P}_{\Omega}$.  
At each training epoch, the network parameters are updated to minimize $\LL^M$, until the empirical loss becomes sufficiently small. The final prediction is then obtained via \eqref{sol_recon}, after sufficient training has been performed.  

\begin{figure}[t]
    \centering
    \includegraphics[width=0.9\textwidth]{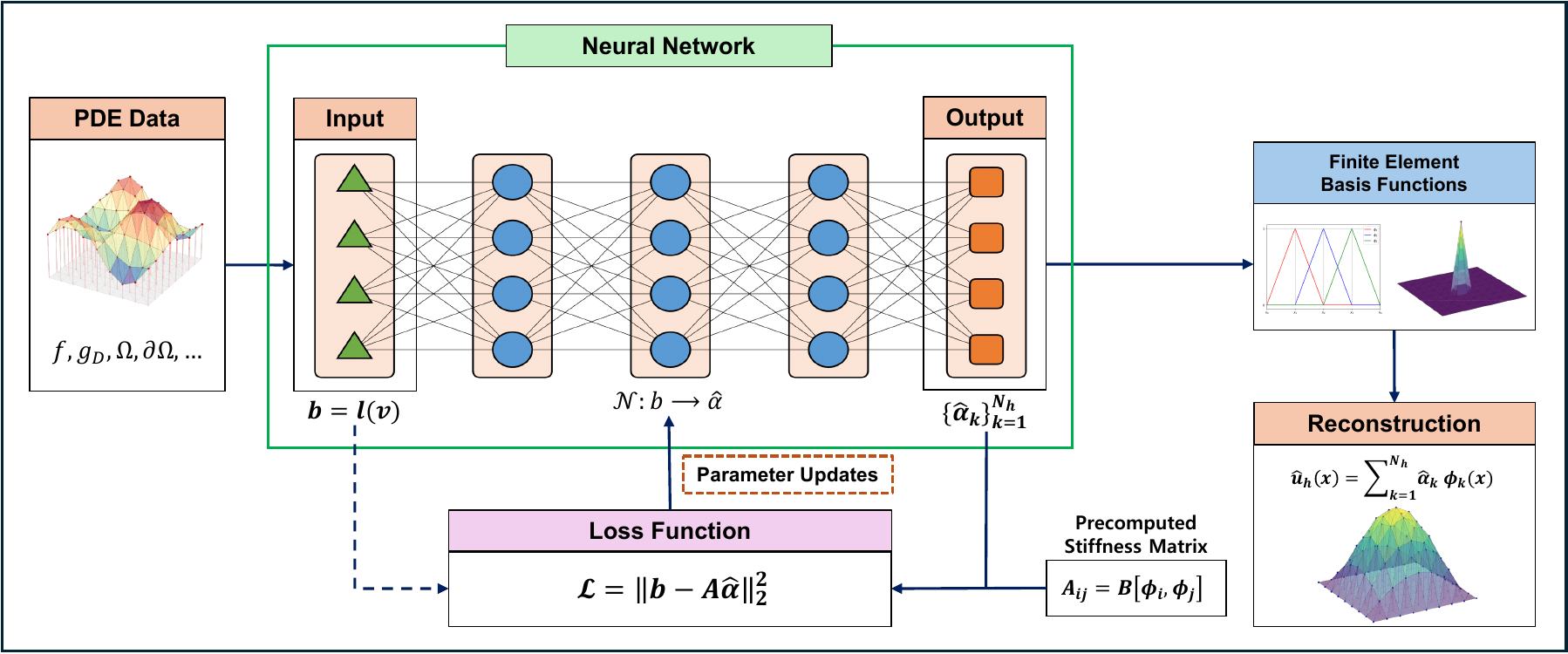}
    \caption{
    Schematic overview of FEONet.
    }
    \label{fig:feonet_architecture}
\end{figure}

A crucial feature of FEONet is that training relies solely on random samples from $\Omega$, without requiring any precomputed input-output pairs.  
Moreover, since the method is based on the basis expansion \eqref{sol_recon}, exact boundary conditions can be imposed in the same way as in classical FEM.  
A schematic overview of the FEONet structure is given in Figure \ref{fig:feonet_architecture}. Extensive numerical experiments on benchmark problems are reported in \cite{FEONet}, confirming the effectiveness of the approach in terms of accuracy, generalization capability, and computational efficiency, and the rigorous convergence analysis of FEONet was conducted in \cite{feonet_anal}. As noted in these papers, a major limitation of FEONet is that its computational cost grows rapidly as the number of finite elements increases, which makes large-scale applications challenging. In some cases, training may not progress at all. As discussed before, this observation provides a key motivation for our work. To address the computational cost, training efficiency, and memory bottlenecks, we propose in this paper a new sparse architecture tailored to FEONet. As a simple motivating example, Figure \ref{fig:loss_comparison} summarizes learning curves for different values of $N_h$, the number of degrees of freedom in the finite element discretization. When $N_h$ is small, the original FEONet and our proposed method exhibit comparable behavior. However, for relatively large $N_h$, training of the original FEONet often fails to make sufficient progress and is terminated early (e.g., via early stopping), whereas our sparse architecture consistently yields robust training performance across all tested values of $N_h$. As will be further demonstrated in the experiments, many practical settings require fine meshes with a large number of elements; therefore, our method is expected to advance FEONet toward scalable, real-world applicability.

\begin{figure}[!t]
    \centering
    \includegraphics[width=\textwidth]{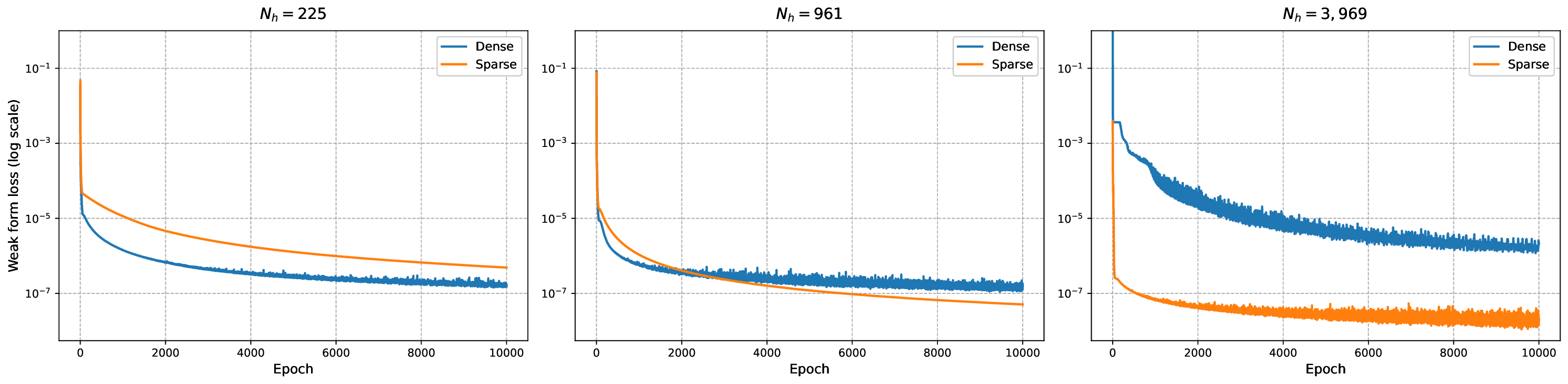}
    \caption{
    Comparison of training losses between fully-connected (dense) and sparsely-connected (sparse) FEONet for the 2D advection-diffusion-reaction equation across different resolutions $N_h=225, 961, \text{and} \ 3{,}969$. As $N_h$ increases, the sparse network shows more stable convergence than the dense connected network.
    }
    \label{fig:loss_comparison}
\end{figure}

\section{Methodology: Sparse FEONet}\label{sec:method}
As in \cite{FEONet}, FEONet can take fully-connected (FC) neural networks or convolutional neural networks (CNN) as model architectures, but in this paper, we shall consider the FC case. 
In an FC layer with $N$ input nodes and $M$ output nodes, the number of weights and biases equals $(N+1)M$.
In this case, the dimension of the parameter space grows rapidly as the number of nodes and layers increases.
Thus, from an optimization viewpoint, it is crucial to reduce the number of parameters.
This motivates us to introduce a new approach that assigns sparse weights to each layer.
In this approach, we represent each layer’s weight matrix in a sparse-matrix format.

\begin{figure}[t!]
\begin{center}
\begin{tikzpicture}[scale=0.75, transform shape]

\draw[fill=yellow!15] (-0.5,0.5)--(3.5,0.5)--(3.5,4.5)--(-0.5,4.5)--(-0.5,0.5);

\draw[fill=red!20] (-0.5,1.5)--(0.5,0.5)--(1.5,0.5)--(1.5,1.5)--(0.5,2.5)--(-0.5,2.5)--(-0.5,1.5);
\draw[fill=blue!20] (3.5,4.5)--(3.5,3.5)--(2.5,4.5)--(3.5,4.5);

\foreach \x in {-1,0,1,2,3,4} {
\draw[thick] (-1.5, 1.5+\x) -- (0.5+\x, -0.5);
\draw[thick] (4.5, 1.5+\x) -- (0.5+\x, 5.5);
  \foreach \y in {-1,0,1,2,3,4,5} {
    \draw[thick] (\x-0.5,\y+0.5) -- (\x+0.5,\y+0.5);
  }
}

\foreach \x in {-1,0,1,2,3,4,5} {
  \foreach \y in {-1,0,1,2,3,4} {
    \draw[thick] (\x-0.5,\y+0.5) -- (\x-0.5,\y+1.5);
  }
}

\foreach \x in {0,1,2,3,4} {
  \fill[draw=black, thick, fill=white] (-1.5,\x+0.5) circle (4pt);
  \fill[draw=black, thick, fill=white] (4.5,\x+0.5) circle (4pt);
  \fill[draw=black, thick, fill=white] (\x-0.5,-0.5) circle (4pt);
  \fill[draw=black, thick, fill=white] (\x-0.5,5.5) circle (4pt);
  \foreach \y in {0,1,2,3,4} {
    \fill[black!100] (\x-0.5,\y+0.5) circle (4pt);  
  }
  \draw (\x-0.75, 0.45) node[below]{\the\numexpr \x+1\relax};
  \draw (\x-0.75, 1.45) node[below]{\the\numexpr \x+6\relax};
  \draw (\x-0.75, 2.45) node[below]{\the\numexpr \x+11\relax};
  \draw (\x-0.75, 3.45) node[below]{\the\numexpr \x+16\relax};
  \draw (\x-0.75, 4.45) node[below]{\the\numexpr \x+21\relax};
}
\fill[draw=black, thick, fill=white] (-1.5,-0.5) circle (4pt);
\fill[draw=black, thick, fill=white] (-1.5,5.5) circle (4pt);
\fill[draw=black, thick, fill=white] (4.5,-0.5) circle (4pt);
\fill[draw=black, thick, fill=white] (4.5,5.5) circle (4pt);

\draw[red, fill=red] (0.5,1.5) circle [radius = 4pt];
\draw[blue, fill=blue] (3.5,4.5) circle [radius = 4pt];

\foreach \x in {1,2,...,25} {
\draw[thick, fill=black!50] (7,-3+\x*0.4) circle [radius=0.1] node[left]{\the\numexpr 26-\x\relax};
\draw[thick, fill=black!50] (10,-3+\x*0.4) circle [radius=0.1] node[right]{\the\numexpr 26-\x\relax};
    \foreach \y in {1,2,...,25}{
    \draw[ultra thin, black!50] (7.1, -3+\x*0.4) -- (9.9, -3+\y*0.4);
    }
}

\foreach \x in {1,2,...,25} {
\draw[thick, fill=black!50] (13,-3+\x*0.4) circle [radius=0.1] node[left]{\the\numexpr 26-\x\relax};
\draw[thick, fill=black!50] (16,-3+\x*0.4) circle [radius=0.1] node[right]{\the\numexpr 26-\x\relax};
\draw[ultra thin, black!50] (13.1,-3+\x*0.4)--(15.9,-3+\x*0.4);
}

\foreach \y in {0,1,2,3}{
    \draw[ultra thin, black!50] (13.1, -2.6+\y*2)--(15.9,-2.2+\y*2);
    \draw[ultra thin, black!50] (13.1, -2.6+\y*2)--(15.9,-0.6+\y*2);
    \draw[ultra thin, black!50] (13.1, -0.6+\y*2)--(15.9,-2.2+\y*2);
    \draw[ultra thin, black!50] (13.1, -0.6+\y*2)--(15.9,-2.6+\y*2);

    \draw[ultra thin, black!50] (13.1, -1+\y*2)--(15.9,-1.4+\y*2);
    \draw[ultra thin, black!50] (13.1, -1+\y*2)--(15.9, 1+\y*2);
    \draw[ultra thin, black!50] (13.1, -1+\y*2)--(15.9,0.6+\y*2);
    \draw[ultra thin, black!50] (13.1, 1+\y*2)--(15.9, -1+\y*2);

    \draw[ultra thin, black!50] (13.1, 7.0-\y*0.4)--(15.9,6.6-\y*0.4);
    \draw[ultra thin, black!50] (13.1, 6.6-\y*0.4)--(15.9,7.0-\y*0.4);
    
    \foreach \x in {1,2,3}{
    \draw[ultra thin, black!50] (13.1, -2.6+2*\y+\x*0.4)--(15.9,-2.2+2*\y+\x*0.4);
    \draw[ultra thin, black!50] (13.1, -2.6+2*\y+\x*0.4)--(15.9,-3+2*\y+\x*0.4);
    \draw[ultra thin, black!50] (13.1, -2.6+2*\y+\x*0.4)--(15.9,-1+2*\y+\x*0.4);

    \draw[ultra thin, black!50] (13.1, -0.6+2*\y+\x*0.4)--(15.9,-2.6+2*\y+\x*0.4);
    \draw[ultra thin, black!50] (13.1, -2.6+2*\y+\x*0.4)--(15.9,-0.6+2*\y+\x*0.4);
    \draw[ultra thin, black!50] (13.1, -0.6+2*\y+\x*0.4)--(15.9,-2.2+2*\y+\x*0.4);
    }
}

\draw[blue, fill=blue] (7,-2.6) circle [radius=0.1];
\draw[red, fill=red] (7,4.6) circle [radius=0.1];

\draw[blue, fill=blue] (13,-2.6) circle [radius=0.1];
\draw[red, fill=red] (13,4.6) circle [radius=0.1];

\foreach \y in {1,2,...,25}{
    \draw[blue, thick] (7.1, -2.6) -- (9.9, -3+\y*0.4);
    \draw[red, thick] (7.1, 4.6) -- (9.9, -3+\y*0.4);
}

\draw[blue, thick] (13.1, -2.6) -- (15.9, -2.6);
\draw[blue, thick] (13.1, -2.6) -- (15.9, -2.2);
\draw[blue, thick] (13.1, -2.6) -- (15.9, -0.6);

\draw[red, thick] (13.1, 4.6) -- (15.9, 4.6);
\draw[red, thick] (13.1, 4.6) -- (15.9, 4.2);
\draw[red, thick] (13.1, 4.6) -- (15.9, 5);
\draw[red, thick] (13.1, 4.6) -- (15.9, 6.6);
\draw[red, thick] (13.1, 4.6) -- (15.9, 6.2);
\draw[red, thick] (13.1, 4.6) -- (15.9, 2.6);
\draw[red, thick] (13.1, 4.6) -- (15.9, 3);

\draw (8.5,7.5) node{Fully Connected Layer};
\draw (8.5,-3.2) node{(\# of weights: 625)};
\draw (14.5,7.5) node{Sparse Layer ($C_\ell= 1$)};
\draw (14.5,-3.2) node{(\# of weights: 137)};
\end{tikzpicture}  

\caption{Comparison of weight connectivity in fully-connected and sparse layers ($C_\ell = 1$).} \label{sparse_network}
\end{center}    
\end{figure}
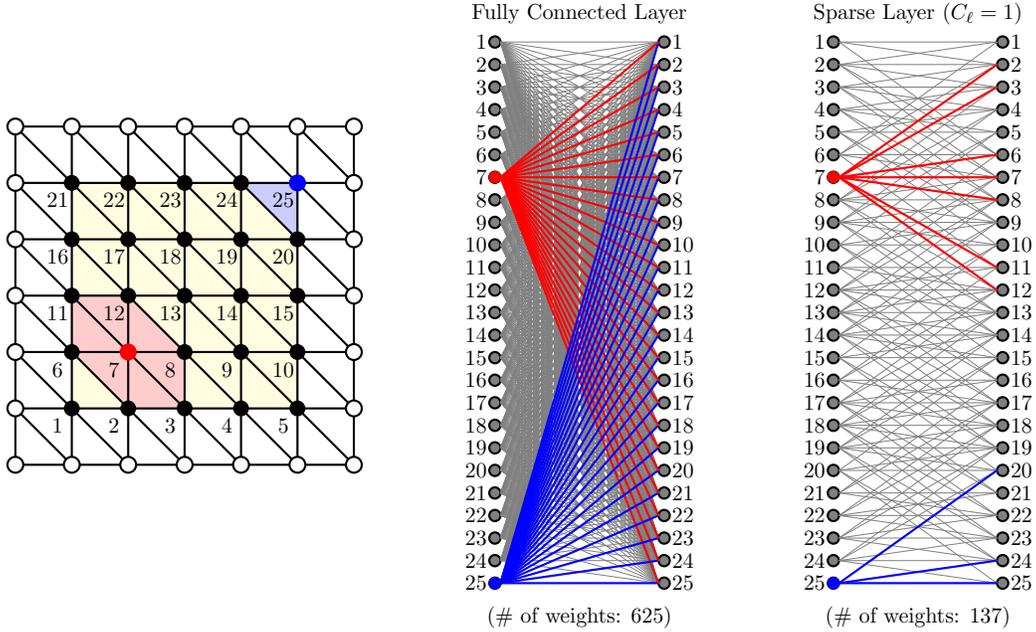

We now describe how to design a sparse network for the FEONet. 
To clearly convey the idea, we shall consider the unit-square domain, and the case of homogeneous Dirichlet boundary condition as in \eqref{main_eq}, which allows us to consider only the interior nodes. 
In Figure~\ref{sparse_network}, we need to determine the coefficients $\{\alpha_k\}$ in \eqref{basis_exp} for the 25 interior nodes. 
Hence, the input layer has 25 nodes, which matches the number of unknowns. 
For simplicity, we also set 25 nodes in each hidden layer and in the output layer.
The parameter $C_\ell$ determines the level of support expansion for the basis function associated with each node. The key idea is that, in the FEM, interactions occur only between neighboring elements. In particular, $B[\phi_i, \phi_j] = 0$ whenever the support of $\phi_i$ and $\phi_j$ do not overlap. Accordingly, we design the neural network to be sparse by introducing connectivity only between nodes corresponding to elements that influence each other (i.e., adjacent elements). More precisely, for $C_\ell = 1$, we connect a node only to the nodes contained in the support of its basis function.
For $C_\ell = 2$, new nodes are further connected to all nodes contained in the union of the supports of the nodes selected at $C_\ell = 1$. 
Thus, the number of connections increases as $C_\ell$ increases. 
Figure~\ref{sparse_network} shows the FC layer and the sparse layer when $C_\ell = 1$.
For example, the nodes contained in the support of the basis function at node 7 are nodes 2, 3, 6, 7, 8, 11, and 12. We therefore connect input node 7 to those output nodes. To quantify the degree of sparsity more precisely, we introduce the notion of sparsity measure.
With this sparse connectivity above, the number of weights is 137 with the sparsity 0.7808, where
$$S = 1 - \frac{\textrm{\# of (nonzero) weights for a sparse layer }}{\textrm{\# of weights for a FC layer}}.$$
Table~\ref{tab:FC_vs_Sparse} shows the number of weights and the sparsity $S$ for the FC layer and for sparse layers at several values of $C_\ell$.
For fixed $N_h$, the sparsity decreases as $C_\ell$ increases; when $C_\ell$ is large enough, the sparse layer coincides with the FC layer, as in the case $N_h = 25$.
As $N_h$ increases, the sparsity increases, and for sufficiently large $N_h$, the sparsity becomes less sensitive to $C_\ell$, which makes our proposed sparsity more practical.

\begin{table}
\caption{Comparison of the number of weights for FC and sparse layers (parentheses indicate sparsity $S$).} \label{tab:FC_vs_Sparse}
\begin{center}
    \begin{tabular}{|r||r|r|r|r|r|}
        \hline
        \multirow{2}{*}{$N_h$ \hspace*{3pt}} & \multirow{2}{*}{\centering FC layer \hspace*{1.5pt}} & \multicolumn{4}{c|}{Sparse layer} \\
        \cline{3-6}
        & & \multicolumn{1}{c|}{$C_\ell = 1$} & \multicolumn{1}{c|}{$C_\ell = 4$} & \multicolumn{1}{c|}{$C_\ell = 8$} & \multicolumn{1}{c|}{$C_\ell = 15$} \\
         \hline
         \hline
         25 & 625 & 137 (0.7808) & 555 (0.1120) & 625 (0.0000) & \multicolumn{1}{c|}{--} \\
         \hline
         100 & 10,000 & 622 (0.9378) & 3,930 (0.6070) & 8,392 (0.1608) & 9,970 (0.0030) \\
         \hline
         900 & 810,000 & 6,062 (0.9925) & 47,930 (0.9408) & 149,352 (0.8156) & 384,860 (0.5249) \\
         \hline
         2,500 & 6,250,000 & 17,102 (0.9973) & 140,730 (0.9775) & 463,912 (0.9258) & 1,340,060 (0.7856) \\
         \hline
         10,000 & 100,000,000 & 69,202 (0.9993) & 586,230 (0.9941) & 2,009,812 (0.9799) & 6,251,560 (0.9375) \\
         \hline
    \end{tabular}
\end{center}
\end{table}

To provide a more formal illustration of the proposed method, we consider the following representative example. For simplicity, let $\Omega = (0,1)^2 \subset \mathbb{R}^2$ be the unit square. Note that the same construction extends to general dimensions and more general domains. We consider a uniform Cartesian grid with $h = 1/n$ in both $x$ and $y$ axis directions, with integer indices $i,\ j \in \{0, 1, \dots, n\}.$ On each grid square, we take the standard right isosceles triangle split (e.g., cut along a diagonal), yielding a triangulation of $\Omega$ for a conforming piecewise linear finite element method. Since we consider homogeneous Dirichlet boundary conditions, only the values corresponding to interior nodes $(i,j)$ with $1\le i, \ j \le n-1$ are unknowns.
Here, we assign a single global node number by the row-major mapping $k = (n-1)(j-1) + i$ (see, e.g., Figure~\ref{sparse_network}). Let $\eta_{i,j} = (ih, jh)$ denote the coordinate of node $(i, j)$ and $\mathcal{V}_h = \{(i,j): 1\le i, \ j\le n-1\}$ be the set of interior nodes with the size $N_h = |\mathcal{V}_h| = (n-1)^2$. Also, we denote the corresponding nodal basis centered at $\eta_{i,j}$ by $\phi_{i,j}$. For the global indices $k$ and $\ell$, we may write $\phi_k = \phi_{p,q}$ and $\phi_\ell = \phi_{r,s}$ with $p,q,r,s \in \{1,\dots,n-1\}$. Under this notation, \eqref{vec_form} can be rewritten as $A_{k\ell} = 
A_{(p,q),(r,s)}$ and $F_k = F_{(p,q)}$.

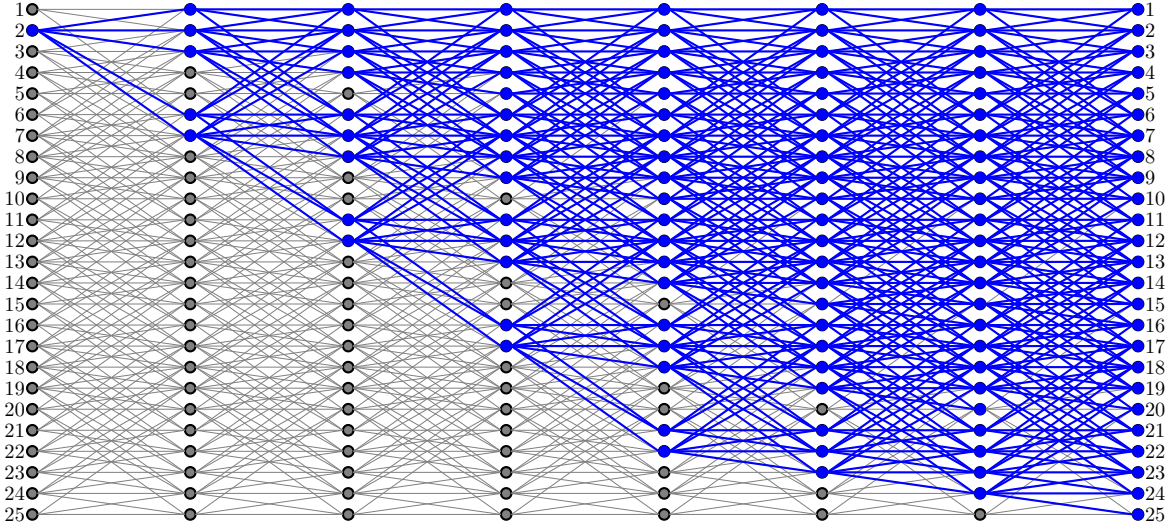
\begin{figure}[t!]
\begin{center}
\begin{tikzpicture}[scale=0.7, transform shape]

\foreach \x in {1,2,...,25} {
\draw[thick, fill=black!50] (0,-3+\x*0.4) circle [radius=0.1] node[left]{\the\numexpr 26-\x\relax};
\draw (21,-3+\x*0.4) node[right]{\the\numexpr 26-\x\relax};
}
\foreach \k in {1,2,3,4,5,6,7} {
\foreach \x in {1,2,...,25} {
\draw[thick, fill=black!50] (3*\k,-3+\x*0.4) circle [radius=0.1];
\draw[ultra thin, black!50] (3*\k-2.9,-3+\x*0.4)--(3*\k-0.1,-3+\x*0.4);
}
}

\foreach \k in {1,2,3,4,5,6,7} {
\foreach \y in {0,1,2,3}{
    \draw[ultra thin, black!50] (3*\k-2.9, -2.6+\y*2)--(3*\k-0.1,-2.2+\y*2);
    \draw[ultra thin, black!50] (3*\k-2.9, -2.6+\y*2)--(3*\k-0.1,-0.6+\y*2);
    \draw[ultra thin, black!50] (3*\k-2.9, -0.6+\y*2)--(3*\k-0.1,-2.2+\y*2);
    \draw[ultra thin, black!50] (3*\k-2.9, -0.6+\y*2)--(3*\k-0.1,-2.6+\y*2);

    \draw[ultra thin, black!50] (3*\k-2.9, -1+\y*2)--(3*\k-0.1,-1.4+\y*2);
    \draw[ultra thin, black!50] (3*\k-2.9, -1+\y*2)--(3*\k-0.1, 1+\y*2);
    \draw[ultra thin, black!50] (3*\k-2.9, -1+\y*2)--(3*\k-0.1,0.6+\y*2);
    \draw[ultra thin, black!50] (3*\k-2.9, 1+\y*2)--(3*\k-0.1, -1+\y*2);

    \draw[ultra thin, black!50] (3*\k-2.9, 7.0-\y*0.4)--(3*\k-0.1,6.6-\y*0.4);
    \draw[ultra thin, black!50] (3*\k-2.9, 6.6-\y*0.4)--(3*\k-0.1,7.0-\y*0.4);
    
    \foreach \x in {1,2,3}{
    \draw[ultra thin, black!50] (3*\k-2.9, -2.6+2*\y+\x*0.4)--(3*\k-0.1,-2.2+2*\y+\x*0.4);
    \draw[ultra thin, black!50] (3*\k-2.9, -2.6+2*\y+\x*0.4)--(3*\k-0.1,-3+2*\y+\x*0.4);
    \draw[ultra thin, black!50] (3*\k-2.9, -2.6+2*\y+\x*0.4)--(3*\k-0.1,-1+2*\y+\x*0.4);

    \draw[ultra thin, black!50] (3*\k-2.9, -0.6+2*\y+\x*0.4)--(3*\k-0.1,-2.6+2*\y+\x*0.4);
    \draw[ultra thin, black!50] (3*\k-2.9, -2.6+2*\y+\x*0.4)--(3*\k-0.1,-0.6+2*\y+\x*0.4);
    \draw[ultra thin, black!50] (3*\k-2.9, -0.6+2*\y+\x*0.4)--(3*\k-0.1,-2.2+2*\y+\x*0.4);
    }
}
}

\draw[blue, fill=blue] (0,6.6) circle [radius=0.1];

\foreach \k in {1,...,7} {
\draw[blue, thick] (3*\k-2.9, 6.6) -- (3*\k-0.1, 7); 
\draw[blue, thick] (3*\k-2.9, 6.6) -- (3*\k-0.1, 6.6);
\draw[blue, thick] (3*\k-2.9, 6.6) -- (3*\k-0.1, 6.2);
\draw[blue, thick] (3*\k-2.9, 6.6) -- (3*\k-0.1, 5); 
\draw[blue, thick] (3*\k-2.9, 6.6) -- (3*\k-0.1, 4.6);

\draw[blue, fill=blue] (3*\k,7) circle [radius=0.1]; 
\draw[blue, fill=blue] (3*\k,6.6) circle [radius=0.1];
\draw[blue, fill=blue] (3*\k,6.2) circle [radius=0.1];
\draw[blue, fill=blue] (3*\k,5) circle [radius=0.1]; 
\draw[blue, fill=blue] (3*\k,4.6) circle [radius=0.1];
}

\foreach \k in {2,...,7} {
\draw[blue, thick] (3*\k-2.9, 7) -- (3*\k-0.1, 7); 
\draw[blue, thick] (3*\k-2.9, 7) -- (3*\k-0.1, 6.6); 
\draw[blue, thick] (3*\k-2.9, 7) -- (3*\k-0.1, 5); 
\draw[blue, thick] (3*\k-2.9, 6.2) -- (3*\k-0.1, 6.6); 
\draw[blue, thick] (3*\k-2.9, 6.2) -- (3*\k-0.1, 6.2); 
\draw[blue, thick] (3*\k-2.9, 6.2) -- (3*\k-0.1, 5.8); 
\draw[blue, thick] (3*\k-2.9, 6.2) -- (3*\k-0.1, 4.6); 
\draw[blue, thick] (3*\k-2.9, 6.2) -- (3*\k-0.1, 4.2); 

\draw[blue, thick] (3*\k-2.9, 5) -- (3*\k-0.1, 7); 
\draw[blue, thick] (3*\k-2.9, 5) -- (3*\k-0.1, 6.6); 
\draw[blue, thick] (3*\k-2.9, 5) -- (3*\k-0.1, 5); 
\draw[blue, thick] (3*\k-2.9, 5) -- (3*\k-0.1, 4.6); 
\draw[blue, thick] (3*\k-2.9, 5) -- (3*\k-0.1, 3); 

\draw[blue, thick] (3*\k-2.9, 4.6) -- (3*\k-0.1, 6.6); 
\draw[blue, thick] (3*\k-2.9, 4.6) -- (3*\k-0.1, 6.2); 
\draw[blue, thick] (3*\k-2.9, 4.6) -- (3*\k-0.1, 5); 
\draw[blue, thick] (3*\k-2.9, 4.6) -- (3*\k-0.1, 4.6); 
\draw[blue, thick] (3*\k-2.9, 4.6) -- (3*\k-0.1, 4.2); 
\draw[blue, thick] (3*\k-2.9, 4.6) -- (3*\k-0.1, 3); 
\draw[blue, thick] (3*\k-2.9, 4.6) -- (3*\k-0.1, 2.6); 

\draw[blue, fill=blue] (3*\k,5.8) circle [radius=0.1]; 
\draw[blue, fill=blue] (3*\k,4.2) circle [radius=0.1]; 
\draw[blue, fill=blue] (3*\k,3) circle [radius=0.1]; 
\draw[blue, fill=blue] (3*\k,2.6) circle [radius=0.1]; 
}

\foreach \k in {3,...,7} {
\draw[blue, thick] (3*\k-2.9, 5.8) -- (3*\k-0.1, 6.2); 
\draw[blue, thick] (3*\k-2.9, 5.8) -- (3*\k-0.1, 5.8); 
\draw[blue, thick] (3*\k-2.9, 5.8) -- (3*\k-0.1, 5.4); 
\draw[blue, thick] (3*\k-2.9, 5.8) -- (3*\k-0.1, 4.2); 
\draw[blue, thick] (3*\k-2.9, 5.8) -- (3*\k-0.1, 3.8); 

\draw[blue, thick] (3*\k-2.9, 4.2) -- (3*\k-0.1, 6.2); 
\draw[blue, thick] (3*\k-2.9, 4.2) -- (3*\k-0.1, 5.8); 
\draw[blue, thick] (3*\k-2.9, 4.2) -- (3*\k-0.1, 4.6); 
\draw[blue, thick] (3*\k-2.9, 4.2) -- (3*\k-0.1, 4.2); 
\draw[blue, thick] (3*\k-2.9, 4.2) -- (3*\k-0.1, 3.8); 
\draw[blue, thick] (3*\k-2.9, 4.2) -- (3*\k-0.1, 2.6); 
\draw[blue, thick] (3*\k-2.9, 4.2) -- (3*\k-0.1, 2.2); 

\draw[blue, thick] (3*\k-2.9, 3) -- (3*\k-0.1, 5); 
\draw[blue, thick] (3*\k-2.9, 3) -- (3*\k-0.1, 4.6); 
\draw[blue, thick] (3*\k-2.9, 3) -- (3*\k-0.1, 3); 
\draw[blue, thick] (3*\k-2.9, 3) -- (3*\k-0.1, 2.6); 
\draw[blue, thick] (3*\k-2.9, 3) -- (3*\k-0.1, 1); 

\draw[blue, thick] (3*\k-2.9, 2.6) -- (3*\k-0.1, 4.6); 
\draw[blue, thick] (3*\k-2.9, 2.6) -- (3*\k-0.1, 4.2); 
\draw[blue, thick] (3*\k-2.9, 2.6) -- (3*\k-0.1, 3); 
\draw[blue, thick] (3*\k-2.9, 2.6) -- (3*\k-0.1, 2.6); 
\draw[blue, thick] (3*\k-2.9, 2.6) -- (3*\k-0.1, 2.2); 
\draw[blue, thick] (3*\k-2.9, 2.6) -- (3*\k-0.1, 1); 
\draw[blue, thick] (3*\k-2.9, 2.6) -- (3*\k-0.1, 0.6); 

\draw[blue, fill=blue] (3*\k,5.4) circle [radius=0.1]; 
\draw[blue, fill=blue] (3*\k,3.8) circle [radius=0.1]; 
\draw[blue, fill=blue] (3*\k,2.2) circle [radius=0.1]; 
\draw[blue, fill=blue] (3*\k,1) circle [radius=0.1]; 
\draw[blue, fill=blue] (3*\k,0.6) circle [radius=0.1]; 
}

\foreach \k in {4,5,6,7} {
\draw[blue, thick] (3*\k-2.9, 5.4) -- (3*\k-0.1, 5.8); 
\draw[blue, thick] (3*\k-2.9, 5.4) -- (3*\k-0.1, 5.4); 
\draw[blue, thick] (3*\k-2.9, 5.4) -- (3*\k-0.1, 3.8); 
\draw[blue, thick] (3*\k-2.9, 5.4) -- (3*\k-0.1, 3.4); 

\draw[blue, thick] (3*\k-2.9, 3.8) -- (3*\k-0.1, 5.8); 
\draw[blue, thick] (3*\k-2.9, 3.8) -- (3*\k-0.1, 5.4); 
\draw[blue, thick] (3*\k-2.9, 3.8) -- (3*\k-0.1, 4.2); 
\draw[blue, thick] (3*\k-2.9, 3.8) -- (3*\k-0.1, 3.8); 
\draw[blue, thick] (3*\k-2.9, 3.8) -- (3*\k-0.1, 3.4); 
\draw[blue, thick] (3*\k-2.9, 3.8) -- (3*\k-0.1, 2.2); 
\draw[blue, thick] (3*\k-2.9, 3.8) -- (3*\k-0.1, 1.8); 

\draw[blue, thick] (3*\k-2.9, 2.2) -- (3*\k-0.1, 4.2); 
\draw[blue, thick] (3*\k-2.9, 2.2) -- (3*\k-0.1, 3.8); 
\draw[blue, thick] (3*\k-2.9, 2.2) -- (3*\k-0.1, 2.6); 
\draw[blue, thick] (3*\k-2.9, 2.2) -- (3*\k-0.1, 2.2); 
\draw[blue, thick] (3*\k-2.9, 2.2) -- (3*\k-0.1, 1.8); 
\draw[blue, thick] (3*\k-2.9, 2.2) -- (3*\k-0.1, 0.6); 
\draw[blue, thick] (3*\k-2.9, 2.2) -- (3*\k-0.1, 0.2); 

\draw[blue, thick] (3*\k-2.9, 1) -- (3*\k-0.1, 3); 
\draw[blue, thick] (3*\k-2.9, 1) -- (3*\k-0.1, 2.6); 
\draw[blue, thick] (3*\k-2.9, 1) -- (3*\k-0.1, 1); 
\draw[blue, thick] (3*\k-2.9, 1) -- (3*\k-0.1, 0.6); 
\draw[blue, thick] (3*\k-2.9, 1) -- (3*\k-0.1, -1); 

\draw[blue, thick] (3*\k-2.9, 0.6) -- (3*\k-0.1, 2.6); 
\draw[blue, thick] (3*\k-2.9, 0.6) -- (3*\k-0.1, 2.2); 
\draw[blue, thick] (3*\k-2.9, 0.6) -- (3*\k-0.1, 1); 
\draw[blue, thick] (3*\k-2.9, 0.6) -- (3*\k-0.1, 0.6); 
\draw[blue, thick] (3*\k-2.9, 0.6) -- (3*\k-0.1, 0.2); 
\draw[blue, thick] (3*\k-2.9, 0.6) -- (3*\k-0.1, -1); 
\draw[blue, thick] (3*\k-2.9, 0.6) -- (3*\k-0.1, -1.4); 

\draw[blue, fill=blue] (3*\k,3.4) circle [radius=0.1]; 
\draw[blue, fill=blue] (3*\k,1.8) circle [radius=0.1]; 
\draw[blue, fill=blue] (3*\k,0.2) circle [radius=0.1]; 
\draw[blue, fill=blue] (3*\k,-1) circle [radius=0.1]; 
\draw[blue, fill=blue] (3*\k,-1.4) circle [radius=0.1]; 
}

\foreach \k in {5,6,7} {
\draw[blue, thick] (3*\k-2.9, 3.4) -- (3*\k-0.1, 5.4); 
\draw[blue, thick] (3*\k-2.9, 3.4) -- (3*\k-0.1, 3.8); 
\draw[blue, thick] (3*\k-2.9, 3.4) -- (3*\k-0.1, 3.4); 
\draw[blue, thick] (3*\k-2.9, 3.4) -- (3*\k-0.1, 1.8); 
\draw[blue, thick] (3*\k-2.9, 3.4) -- (3*\k-0.1, 1.4); 

\draw[blue, thick] (3*\k-2.9, 1.8) -- (3*\k-0.1, 3.8); 
\draw[blue, thick] (3*\k-2.9, 1.8) -- (3*\k-0.1, 3.4); 
\draw[blue, thick] (3*\k-2.9, 1.8) -- (3*\k-0.1, 2.2); 
\draw[blue, thick] (3*\k-2.9, 1.8) -- (3*\k-0.1, 1.8); 
\draw[blue, thick] (3*\k-2.9, 1.8) -- (3*\k-0.1, 1.4); 
\draw[blue, thick] (3*\k-2.9, 1.8) -- (3*\k-0.1, 0.2); 
\draw[blue, thick] (3*\k-2.9, 1.8) -- (3*\k-0.1, -0.2);

\draw[blue, thick] (3*\k-2.9, .2) -- (3*\k-0.1, 2.2); 
\draw[blue, thick] (3*\k-2.9, .2) -- (3*\k-0.1, 1.8); 
\draw[blue, thick] (3*\k-2.9, .2) -- (3*\k-0.1, 0.6); 
\draw[blue, thick] (3*\k-2.9, .2) -- (3*\k-0.1, 0.2); 
\draw[blue, thick] (3*\k-2.9, .2) -- (3*\k-0.1, -0.2); 
\draw[blue, thick] (3*\k-2.9, .2) -- (3*\k-0.1, -1.4); 
\draw[blue, thick] (3*\k-2.9, .2) -- (3*\k-0.1, -1.8); 

\draw[blue, thick] (3*\k-2.9, -1) -- (3*\k-0.1, 1); 
\draw[blue, thick] (3*\k-2.9, -1) -- (3*\k-0.1, 0.6); 
\draw[blue, thick] (3*\k-2.9, -1) -- (3*\k-0.1, -1); 
\draw[blue, thick] (3*\k-2.9, -1) -- (3*\k-0.1, -1.4); 

\draw[blue, thick] (3*\k-2.9, -1.4) -- (3*\k-0.1, 0.6); 
\draw[blue, thick] (3*\k-2.9, -1.4) -- (3*\k-0.1, 0.2); 
\draw[blue, thick] (3*\k-2.9, -1.4) -- (3*\k-0.1, -1); 
\draw[blue, thick] (3*\k-2.9, -1.4) -- (3*\k-0.1, -1.4);
\draw[blue, thick] (3*\k-2.9, -1.4) -- (3*\k-0.1, -1.8);

\draw[blue, fill=blue] (3*\k,1.4) circle [radius=0.1]; 
\draw[blue, fill=blue] (3*\k,-0.2) circle [radius=0.1];
\draw[blue, fill=blue] (3*\k,-1.8) circle [radius=0.1];
}

\foreach \k in {6,7} {
\draw[blue, thick] (3*\k-2.9, 1.4) -- (3*\k-0.1, 1.0); 
\draw[blue, thick] (3*\k-2.9, 1.4) -- (3*\k-0.1, 1.8); 
\draw[blue, thick] (3*\k-2.9, 1.4) -- (3*\k-0.1, 1.4); 
\draw[blue, thick] (3*\k-2.9, 1.4) -- (3*\k-0.1, -0.2);
\draw[blue, thick] (3*\k-2.9, 1.4) -- (3*\k-0.1, -0.6);

\draw[blue, thick] (3*\k-2.9, -.2) -- (3*\k-0.1, 1.8); 
\draw[blue, thick] (3*\k-2.9, -.2) -- (3*\k-0.1, 1.4); 
\draw[blue, thick] (3*\k-2.9, -.2) -- (3*\k-0.1, 0.2); 
\draw[blue, thick] (3*\k-2.9, -.2) -- (3*\k-0.1, -0.2);
\draw[blue, thick] (3*\k-2.9, -.2) -- (3*\k-0.1, -0.6);
\draw[blue, thick] (3*\k-2.9, -.2) -- (3*\k-0.1, -1.8);
\draw[blue, thick] (3*\k-2.9, -.2) -- (3*\k-0.1, -2.2);

\draw[blue, thick] (3*\k-2.9, -1.8) -- (3*\k-0.1, 0.2); 
\draw[blue, thick] (3*\k-2.9, -1.8) -- (3*\k-0.1, -0.2); 
\draw[blue, thick] (3*\k-2.9, -1.8) -- (3*\k-0.1, -1.4); 
\draw[blue, thick] (3*\k-2.9, -1.8) -- (3*\k-0.1, -1.8); 
\draw[blue, thick] (3*\k-2.9, -1.8) -- (3*\k-0.1, -2.2); 

\draw[blue, fill=blue] (3*\k,-0.6) circle [radius=0.1]; 
\draw[blue, fill=blue] (3*\k,-2.2) circle [radius=0.1]; 
}

\draw[blue, thick] (18.1, -2.2) -- (20.9, -.2); 
\draw[blue, thick] (18.1, -2.2) -- (20.9, -0.6); 
\draw[blue, thick] (18.1, -2.2) -- (20.9, -1.8); 
\draw[blue, thick] (18.1, -2.2) -- (20.9, -2.2); 
\draw[blue, thick] (18.1, -2.2) -- (20.9, -2.6); 

\draw[blue, fill=blue] (21,-2.6) circle [radius=0.1]; 
\end{tikzpicture}  

\caption{Propagation from node 2 in the sparse network with $C_\ell = 1$ and $N_h = 25$. Blue shows the set reached after successive layers; gray shows other admissible connections.} \label{fig:propagation}
\end{center}    
\end{figure}

In our setting, every layer has width $N_h$ and the neurons in every layer are indexed by the same interior-node indices $(i,j) \in \mathcal{V}_h$. 
We let $z^{(0)} = F\in\R^{N_h}$ be an input of a neural network and, for $\ell = 0,1,\dots,L-1$, we  write
\begin{equation}\label{val_layer}
z^{(\ell+1)}_{i,j} = \sigma\left(\sum_{(p,q)\in \mathcal{V}_h} W^{(\ell+1)}_{(i,j),(p,q)} z^{(\ell)}_{p,q}+b^{(\ell+1)}_{{(i,j)}} \right),
\end{equation}
where $\sigma$ is an activation function, $W^{(\ell)}$ is the weight matrix, and $b^{(\ell)}$ is the bias vector for the $\ell$-th layer. The final output is $z^{(L)} \in \mathbb{R}^{N_h}$, which is the prediction for the finite element coefficients. 
In \eqref{val_layer}, the weights are not fully-connected. 
Let $\mathcal{P}^{(1)}_{i,j} = \textrm{supp}(\phi_{i,j})$. For $t \ge 1$, we define the level-$(t+1)$ patch recursively by
\begin{equation}\label{level_patch}
\mathcal{P}^{(t+1)}_{i,j} := \bigcup_{(p,q) \in \mathcal{V}_h}\{\text{supp}(\phi_{p,q}) \ : \  |\textrm{supp}(\phi_{p,q}) \cap \mathcal{P}^{(t)}_{i,j}| \neq 0\}.
\end{equation}
We then define the level-$t$ neighborhood by
$$\mathcal{V}_{t}(i,j) = \{(p,q) \in \mathcal{V}_h : |\textrm{supp}(\phi_{p,q}) \cap \mathcal{P}^{(t)}_{i,j}| \neq 0 \}.$$
Accordingly, for interior nodes $(i,j), (p,q) \in \mathcal{V}_h$, we allow a nonzero weight as follows:
\begin{equation}\label{sparse_weight}
    W^{(k)}_{(i,j),(p,q)} \neq 0 \quad \text{is allowed only if}\ (p,q) \in \mathcal{V}_{C_\ell}(i,j),
\end{equation}
for some given connectivity level $C_\ell \in \mathbb{N}$, and we set the weight to zero otherwise to enforce sparsity.
For a fixed $C_\ell$ and a stack of $L$ such layers, the dependence of $z^{(L)}_{i,j}$ on the input $z^{(0)}$ is supported on $\mathcal{V}_{L C_\ell}(i,j)$.
Hence, the effective receptive field grows linearly with the depth $L$ (see, e.g., Figure~\ref{fig:propagation}).
Details on the effect of connectivity are provided in the appendix.
Remarkably, accurate approximations are achieved even when the effective receptive field does not fully cover the entire domain.

\section{Theoretical Analysis}\label{sec:theory}
In this section, we provide the theoretical background for the proposed sparse FEONet approach. As discussed earlier, our method significantly reduces the number of parameters compared to the original FEONet. While this is clearly advantageous from a computational point of view, it raises a natural theoretical question on the approximation capability of the resulting sparse architecture. In addition, we need to compare our finite-element-guided sparsification strategy with more naive approaches that impose sparsity in a random or purely heuristic manner. In particular, we discuss in what sense the finite-element-guided design leads to theoretical advantages, and how these advantages are reflected in practice. Moreover, we investigate, from a theoretical perspective, whether the proposed sparsity pattern not only improves computational efficiency but also leads to highly stable and efficient training. The goal of this section is to address these theoretical questions for sparse FEONet and to establish rigorous guarantees for its approximation and stability properties. We conclude the section by presenting brief numerical experiments that support the theoretical findings.

\subsection{Universal approximation theorem}
This section is devoted to providing the theoretical justification for the proposed sparse neural networks. In particular, we address the question of whether the proposed sparse neural network constitutes a suitable approximation class. While the universal approximation property of dense neural networks is well established (see, e.g., \cite{UA_1, UA_2, UA_3}), our approach relies on a sparse architecture in which connectivity is restricted to nodes associated with neighboring elements. Thus, it is essential to demonstrate that neural networks with such a structure can still approximate the target function effectively. Accordingly, in this section, we shall first establish a universal approximation theorem for the proposed sparse network. 

Note that for our proposed method, the number of nodes in each layer of our neural network coincides with the number of degrees of freedom $N_h\in\mathbb{N}$. However, in such a setting, it is well known in the literature that even dense neural networks cannot, in general, be guaranteed to exhibit the universal approximation property. In fact, according to the results known to date (e.g., \cite{uat_r_1, uat_r_2, uat_r_3}), a class of neural networks attains the universal approximation property only when the number of nodes in its hidden layer is taken to be sufficiently large. Consequently, one has even less justification for expecting any universal approximation capability from a sparse network defined in the previous section.
Note, however, that the mapping we seek to approximate by means of a neural network does not belong to an arbitrary class of continuous functions. Rather, it is the mapping that assigns, to the parameters defining a given PDE problem, the corresponding coefficients of the associated finite element discretization. As described in \eqref{LAS}, the finite element coefficient can be characterized by a linear algebraic system. Thus, in our framework, the essential task is to represent the linear map $x\mapsto A^{-1}x$ where $A$ denotes the finite element matrix, by a sparse neural network. As will be verified in the subsequent proof, this structural feature aligns with our setting in a particularly precise and favorable manner, and the desired conclusion indeed follows.

To simplify the theoretical setting of the network connectivity, we first describe our proposed method within a graph-theoretic framework. Let us begin with the following definitions.

\begin{definition}[Simple undirected graph]
An \emph{undirected graph} is a pair $G = (V,E)$, where
\begin{itemize}
  \item $V$ is a finite set, whose elements are called \emph{vertices}, and
  \item $E$ is a set of unordered pairs of distinct vertices, i.e.,
  $E \subseteq \big\{ \{u,v\} : u,v \in V,\ u\neq v \big\}$.
\end{itemize}
The elements of $E$ are called \emph{edges}. Moreover, if we do not allow loops (edges of the form $\{v,v\}$) or multiple edges between the same pair of vertices, we call the graph \emph{simple}.
\end{definition}

\begin{definition}[Adjacency]
Let $G=(V,E)$ be a simple undirected graph. Two distinct vertices $u,v\in V$ are said to be \emph{adjacent} if $\{u,v\}\in E$. In this case, we also say that $u$ and $v$ are \emph{joined by an edge}, or that there is an edge between $u$ and $v$.
\end{definition}

\begin{definition}[Path and graph distance]
Let $G=(V,E)$ be a simple undirected graph. For given vertices $v_0$, $v_\ell$, a \emph{path} in $G$ of length $\ell \ge 1$ is a finite sequence of vertices $\{v_0, v_1, \dots, v_\ell\}$ such that $\{v_{m-1}, v_m\} \in E$ for each $m = 1,\dots,\ell$.
The graph distance $d_G(u,v)$ is the length (number of edges) of a shortest path between $u$ and $v$ in $G$.
Each edge has a unit cost, so the distance counts edges.
If no path exists, we set $d_G(u,v) = \infty$.
\end{definition}

\begin{definition}[Connected vertices and connected graph]
Let $G=(V,E)$ be a simple undirected graph. Two vertices $u,v\in V$ are said to be \emph{connected} if either $u=v$ or there exists a path in $G$ from $u$ to $v$. The graph $G$ is called \emph{connected} if every pair of vertices in $V$ is connected.
\end{definition}

\begin{definition}[Connected component]
Let $G=(V,E)$ be a simple undirected graph. A nonempty subset $C \subseteq V$ is called a \emph{connected component} of $G$ if
\begin{itemize}
  \item for any $u,v\in C$, there exists a path in $G$ from $u$ to $v$ (so the induced subgraph on $C$ is connected);
  \item $C$ is maximal with respect to this property: if $C\subseteq C'\subseteq V$ and the induced subgraph on $C'$ is connected, then $C'=C$.
\end{itemize}
The connected components of $G$ form a partition of $V$. We say that $G$ is \emph{disconnected} if it has at least two distinct connected components.
\end{definition}

For a given simple undirected graph $G$, we define a sparse matrix associated with the connectivity of $G$. Here, $\mathrm{GL}_N(\R)$ denotes the group of invertible matrices over $\R$ with respect to the matrix multiplication.

\begin{definition}
Let $G = (V,E)$ be the simple undirected graph with $|V|=N$. A matrix $W = (w_{ij}) \in M_{N}(\R)$ is called \emph{$G$-sparse} if the following hold:
\begin{itemize}
  \item For each $i$, the diagonal entry $w_{ii}$ can be nonzero.
  \item For $i\neq j$, the entries $w_{ij}$ and $w_{ji}$ can be nonzero only if $\{i,j\}\in E$.
\end{itemize}
\end{definition}

\begin{definition} Let $G=(V,E)$ be a simple undirected graph with $|V|=N$. We define $H_{G}(\R)$ to be the subgroup of $\mathrm{GL}_{N}(\R)$ generated by all invertible $G$-sparse matrices. That is,
\[
  H_{G}(\R) := \big\langle M \in \mathrm{GL}_{N}(\R) : M \text{ is $G$-sparse} \big\rangle
  \;\le\; \mathrm{GL}_{N}(\R).
\]
\end{definition}
The first key result in our analysis is the following.

\begin{theorem}\label{thm:main_1}
Let $G$ be a simple undirected graph with $|V|=N$. Then
\[
  H_G(\R) = \mathrm{GL}_N(\R)\text{ if and only if $G$ is connected.}
\]
\end{theorem}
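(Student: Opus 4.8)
The plan is to treat the two implications separately: the ``only if'' direction via a block-structure argument, and the substantive ``if'' direction by showing that $H_G(F)$ contains a generating set of $\mathrm{GL}_N(F)$ built from elementary matrices.

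For the ``only if'' direction I will argue by contrapositive. If $G$ is disconnected with connected components $V_1,\dots,V_r$ ($r\ge 2$), then after relabeling the vertices so that those of $V_1$ precede those of $V_2$, and so on, every $G$-sparse matrix becomes block diagonal with blocks indexed by the $V_t$: indeed, if $i$ and $j$ lie in different components then $i\ne j$ and $\{i,j\}\notin E$, so $w_{ij}=0$. The invertible block-diagonal matrices with this fixed block pattern form a subgroup of $\mathrm{GL}_N(F)$, so $H_G(F)$ is contained in it; and this subgroup is proper, since (for instance) the permutation matrix transposing a vertex of $V_1$ with a vertex of $V_2$ is invertible but not block diagonal. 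Hence $H_G(F)\ne \mathrm{GL}_N(F)$.

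For the ``if'' direction, assume $G$ is connected. Every invertible diagonal matrix is $G$-sparse, hence lies in $H_G(F)$; in particular $\mathrm{diag}(t,1,\dots,1)\in H_G(F)$ for all $t\in F^\times$. For every edge $\{i,j\}\in E$ and every $\lambda\in F$, the elementary transvection $E_{ij}(\lambda):=I+\lambda e_{ij}$ is $G$-sparse and invertible (with inverse $E_{ij}(-\lambda)$), so it lies in $H_G(F)$. The key step is to upgrade this to \emph{all} transvections $E_{ij}(\lambda)$ with $i\ne j$, including those for which $\{i,j\}$ is not an edge; for this I will use the Steinberg commutator identity
\[
\big[E_{ik}(\lambda),\,E_{kj}(1)\big]\;=\;E_{ik}(\lambda)\,E_{kj}(1)\,E_{ik}(-\lambda)\,E_{kj}(-1)\;=\;E_{ij}(\lambda),
\]
valid for pairwise distinct $i,j,k$, together with an induction on the graph distance $d_G(i,j)$. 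The base case $d_G(i,j)=1$ is the edge case already handled. For $d_G(i,j)=d\ge 2$, I choose a shortest path from $i$ to $j$ and let $k$ be its penultimate vertex; then $i,j,k$ are pairwise distinct, $\{k,j\}\in E$, and $d_G(i,k)=d-1$, so $E_{kj}(\pm1)\in H_G(F)$ and, by the inductive hypothesis, $E_{ik}(\pm\lambda)\in H_G(F)$ for every $\lambda$, whence the displayed identity puts $E_{ij}(\lambda)$ in $H_G(F)$. Since $G$ is connected, this yields $E_{ij}(\lambda)\in H_G(F)$ for all $i\ne j$ and all $\lambda\in F$. Finally, the elementary transvections generate $\mathrm{SL}_N(F)$ (Gaussian elimination over a field), and each $M\in\mathrm{GL}_N(F)$ equals $\mathrm{diag}(\det M,1,\dots,1)$ times an element of $\mathrm{SL}_N(F)$; hence $H_G(F)\supseteq\mathrm{GL}_N(F)$, and the reverse inclusion is immediate, so $H_G(F)=\mathrm{GL}_N(F)$.

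I expect the main obstacle to be the inductive construction of transvections along non-edges: the Steinberg relation requires three \emph{distinct} indices, and the clean way to guarantee this is to route along a \emph{shortest} path and induct on graph distance, rather than on path length (where a path could revisit vertices). A secondary point is that I will invoke, without reproving it, the standard fact that $\mathrm{SL}_N(F)$ coincides with the subgroup generated by elementary transvections over a field. The cases $N\le 2$ are degenerate — a connected graph on at most two vertices makes every matrix $G$-sparse — and are absorbed harmlessly, since in a connected graph any pair of distinct vertices at distance $\ge 2$ already forces $N\ge 3$, so the intermediate vertex $k$ is automatically available.
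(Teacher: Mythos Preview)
Your proposal is correct and follows essentially the same route as the paper: the block-diagonal argument for the disconnected case, and for the connected case the commutator identity $E_{ij}(st)=E_{ik}(s)E_{kj}(t)E_{ik}(-s)E_{kj}(-t)$ combined with an induction along paths to put all transvections into $H_G(F)$, after which generation of $\mathrm{GL}_N(F)$ by transvections and invertible diagonals finishes the job. Your choice to induct on graph distance (shortest paths) rather than path length is a mild refinement that cleanly guarantees the three indices in the commutator are pairwise distinct.
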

To show this equivalence, let us first prove the implication in one direction, which is encapsulated in the following proposition.

\begin{proposition}\label{prop:disconnected}
If the graph $G=(V,E)$ with $|V|=N$ is disconnected, then $H_G(\R)$ is a proper subgroup of $\mathrm{GL}_N(\R)$, in other words, $H_G(\R) \neq \mathrm{GL}_N(\R)$.
\end{proposition}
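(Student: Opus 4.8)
The plan is to exploit the fact that disconnectedness forces a nontrivial invariant block decomposition on every $G$-sparse matrix, a structure that is inherited by the whole generated subgroup, after which properness follows by exhibiting a single invertible matrix that violates this block structure.

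First I would fix the decomposition of $V=\{1,\dots,N\}$ into its connected components, $V = C_1 \cup \cdots \cup C_r$ with $r \ge 2$ (this is exactly where disconnectedness enters), and set $U_k := \mathrm{span}\{e_i : i \in C_k\} \subseteq F^N$, so that $F^N = U_1 \oplus \cdots \oplus U_r$. The key observation is that if $i$ and $j$ lie in different components then $\{i,j\} \notin E$, so by the definition of a $G$-sparse matrix every such $W = (w_{ij})$ satisfies $w_{ij} = 0$ whenever $i$ and $j$ belong to different blocks; equivalently, $W U_k \subseteq U_k$ for every $k$. Thus each $G$-sparse matrix is block diagonal with respect to the partition $\{C_1,\dots,C_r\}$ (block diagonal up to a fixed simultaneous row/column permutation, in case the $C_k$ are not index-contiguous).

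Next I would let $\mathcal{B} := \{M \in \mathrm{GL}_N(F) : M U_k \subseteq U_k \text{ for all } k\}$ and verify that $\mathcal{B}$ is a subgroup of $\mathrm{GL}_N(F)$: closure under products is immediate, and if $M \in \mathcal{B}$ then, since $M$ is bijective and preserves the direct-sum decomposition, it restricts to an automorphism of each $U_k$, whence $M^{-1}$ preserves each $U_k$ as well. Because every invertible $G$-sparse matrix lies in $\mathcal{B}$, the subgroup they generate satisfies $H_G(F) \le \mathcal{B}$; concretely $\mathcal{B} \cong \mathrm{GL}_{|C_1|}(F) \times \cdots \times \mathrm{GL}_{|C_r|}(F)$.

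Finally, to conclude $H_G(F) \neq \mathrm{GL}_N(F)$ it suffices to produce one element of $\mathrm{GL}_N(F) \setminus \mathcal{B}$: picking $a \in C_1$ and $b \in C_2$ (possible since $r \ge 2$) and letting $P$ be the permutation matrix of the transposition $(a\ b)$, we have $P \in \mathrm{GL}_N(F)$ (indeed $P^2 = I$) but $P e_a = e_b \notin U_1$, so $P \notin \mathcal{B} \supseteq H_G(F)$. I do not anticipate a genuine obstacle here; the only point requiring care is that a product of $G$-sparse matrices is in general not itself $G$-sparse (the sparsity pattern can fill in within a component), which is precisely why the argument must be run through the larger block-diagonal group $\mathcal{B}$ rather than through $G$-sparse matrices directly.
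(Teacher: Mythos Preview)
Your argument is correct and follows essentially the same approach as the paper: decompose $V$ into connected components, observe that every $G$-sparse matrix is block diagonal with respect to this partition, note that this block structure is preserved under products (and inverses), and conclude by exhibiting an invertible matrix outside the block-diagonal subgroup. Your write-up is in fact a bit more careful than the paper's, since you explicitly verify that $\mathcal{B}$ is a subgroup and produce a concrete witness $P \notin \mathcal{B}$, whereas the paper simply asserts that such matrices ``obviously'' exist.
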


\begin{proof}
Suppose that $G=(V,E)$ is disconnected. Then $V$ decomposes into a disjoint union of connected components $V = V_1 \,\cup\, V_2 \,\cup\, \cdots \,\cup\, V_k$, with $k\ge 2$. With a suitable reordering of vertices in $V$, it is straightforward to verify that any $G$-sparse matrix $W$ has the form
\[
  W = \begin{pmatrix}
    W_1 & 0   & \cdots & 0\\
    0   & W_2 & \cdots & 0\\
    \vdots & \vdots & \ddots & \vdots\\
    0   & 0   & \cdots & W_k
  \end{pmatrix},
\]
i.e.\ $W$ is block diagonal, with blocks $W_r$ of size $|V_r|\times |V_r|$ with $1\le r\le k$. Note that the product of block diagonal matrices with this block structure is again block diagonal with the same block structure. Hence, every element of $H_G(\R)$ is of the same form. However, obviously, there exist matrices in $\mathrm{GL}_N(\R)$ that cannot be written in this form. This completes the proof.
\end{proof}

In order to prove the converse, we first recall some standard notation for elementary matrices.
\begin{definition}[transvection matrices]
For $1 \leq i,j \leq N$ with $i\neq j$ and for $t\in \R$, we denote by $E_{ij}(t)$ the matrix
\[
  E_{ij}(t) := I_N + t\,e_{ij},
\]
where $e_{ij}$ is the matrix having a $1$ in position $(i,j)$ and zeros elsewhere, and $I_N$ is the $N\times N$ identity matrix.
\end{definition}

In classical linear algebra, the following fact is well-known, which describes a generating set for $\mathrm{GL}_N(\R)$ in terms of transvections and diagonal matrices. More precisely, the general linear group $\mathrm{GL}_N(\R)$ can be generated by all transvection matrices $E_{ij}(t)$ with $i\neq j$, $t\in \R$ and all invertible diagonal matrices ${\rm{diag}}(\lambda_1,\dots,\lambda_N)$.
 Therefore, to complete the proof of Theorem \ref{thm:main_1}, it remains to show that $H_G(\R)$ contains all transvection matrices and invertible diagonal matrices provided that $G$ is connected. It is obvious that any invertible diagonal matrices ${\rm{diag}}(\lambda_1,\dots,\lambda_n)$ are $G$-sparse, and hence contained in $H_G(\R)$. Moreover, if $\{i,j\}\in E$, it is easy to see directly from the definition that $E_{ij}(t)\in H_G(\R)$ for all $t\in\R$. Therefore, what remains to prove is that $E_{ij}(t)\in H_G(\R)$ for all $t\in\R$ even if $\{i,j\}\notin E$. To do this, let us introduce a commutator identity: for distinct indices $i, j, k$, and $s,t\in\R$, there holds
 \begin{equation}\label{comm_id}
     E_{ij}(st)=E_{ik}(s)\, E_{kj}(t)\, E_{ik}(-s)\, E_{kj}(-t),
 \end{equation}
 which follows immediately by directly computing the matrix multiplications using the identity $e_{pq} e_{rs} = \delta_{qr} e_{ps}$. We now use the connectivity of $G$ to show that $H_G(\R)$ contains every transvection matrices $E_{ij}(t)$ even if $\{i,j\}\notin E$.

\begin{proposition}\label{prop:all-transvections}
Assume that $G=(V,E)$ with $|V|=N$ is connected. Then for every pair of distinct indices $i,j\in \{1,\dots,N\}$ and every $t\in \R$, the transvection $E_{ij}(t)$ belongs to $H_G(\R)$.
\end{proposition}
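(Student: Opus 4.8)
The plan is to argue by induction on the graph distance $d_G$. Concretely, for each $\ell\ge 1$ I would set up the statement $P(\ell)$: \emph{for every pair of distinct vertices $v_p,v_q$ with $d_G(v_p,v_q)\le\ell$ and every $t\in\R$, one has $E_{pq}(t)\in H_G(\R)$.} Since $G$ is connected, $d_G(v_i,v_j)$ is finite for every $i,j$, so once $P(\ell)$ is established for all $\ell$ the proposition follows immediately. The base case $\ell=1$ is the easy one: if $d_G(v_p,v_q)=1$ then $\{p,q\}\in E$, so $E_{pq}(t)=I_N+t\,e_{pq}$ is an invertible $G$-sparse matrix and hence belongs to $H_G(\R)$ by the very definition of $H_G(\R)$.

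For the inductive step I would assume $P(\ell-1)$ for some $\ell\ge 2$ and take distinct vertices $v_i,v_j$ with $d_G(v_i,v_j)=\ell$. Fixing a shortest path $v_i=v_{m_0},v_{m_1},\dots,v_{m_\ell}=v_j$ (whose vertices are automatically pairwise distinct, since a path with a repeated vertex can be shortened), I let $k$ be the index of the penultimate vertex $v_{m_{\ell-1}}$. Then $\{k,j\}\in E$, the initial segment of the path shows $d_G(v_i,v_k)\le\ell-1$, and $i,j,k$ are pairwise distinct. By the inductive hypothesis $E_{ik}(s)\in H_G(\R)$ for all $s\in\R$, and since $\{k,j\}\in E$ we likewise have $E_{kj}(s)\in H_G(\R)$ for all $s\in\R$. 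Taking $s=1$ in the commutator identity \eqref{comm_id} then yields
\[
E_{ij}(t)=E_{ik}(1)\,E_{kj}(t)\,E_{ik}(-1)\,E_{kj}(-t),
\]
which exhibits $E_{ij}(t)$ as a product of four elements of the subgroup $H_G(\R)$; hence $E_{ij}(t)\in H_G(\R)$, proving $P(\ell)$ and closing the induction. (Combining this with Proposition~\ref{prop:disconnected} and the classical fact recalled above — that $\mathrm{GL}_N(\R)$ is generated by the transvections $E_{ij}(t)$ together with the invertible diagonal matrices, all of which now lie in $H_G(\R)$ — completes the proof of Theorem~\ref{thm:main_1}.)

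The genuinely delicate point, and the main place where care is needed, is the combinatorial bookkeeping guaranteeing that $i$, $j$, and $k$ are pairwise distinct, which is exactly what makes the commutator identity \eqref{comm_id} applicable; this is why I route the induction along a \emph{shortest} path rather than an arbitrary walk, so that no vertex repeats. A secondary point to watch is that the inductive hypothesis must be phrased uniformly in the scalar $t$ (i.e.\ over all of $\R$), so that both $E_{ik}(1)$ and $E_{ik}(-1)$ are available at the inductive step; otherwise the four-fold product above could not be formed.
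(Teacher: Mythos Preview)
Your proof is correct and follows essentially the same approach as the paper: induction along a path from $v_i$ to $v_j$, using the commutator identity \eqref{comm_id} with $s=1$ at each step to extend the transvection by one edge. The paper phrases the induction on the length of an arbitrary path rather than on $d_G$, but the mechanics are identical; if anything, your insistence on a \emph{shortest} path and your explicit verification that $i,j,k$ are pairwise distinct is slightly more careful than the paper's own write-up.
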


\begin{proof}
We first fix distinct indices $i,j\in \{1,\dots,N\}$ and consider the vertices corresponding to these indices. Since $G$ is connected, there exists a path $i = v_0,\, v_1,\, \dots,\, v_\ell = j$
in $G$ with $\ell \ge 1$, where each $\{v_{m-1},v_m\}$ is an edge of $G$. We shall prove the claim by induction on $\ell$ that $E_{v_0,v_\ell}(t) \in H_G(\R)$ for all $t\in \R$. For the case of $\ell=1$, we see that $\{v_0,v_1\} = \{i,j\}$ is an edge of $G$, and hence, $E_{ij}(t)\in H_G(\R)$ for all $t\in \R$. Next, suppose that the claim holds for all paths of length at most $\ell\ge 1$. Consider the path $v_0, v_1, \dots, v_\ell, v_{\ell+1}$ of length $\ell+1$. By the induction hypothesis, $E_{v_0,v_\ell}(s)\in H_G(\R)$ for all $s\in \R$. Moreover, the last edge $\{v_\ell, v_{\ell+1}\}$ lies in $E$, and thus $E_{v_\ell,v_{\ell+1}}(t)\in H_G(\R)$ for all $t\in \R$. We now apply the commutator identity \eqref{comm_id} with $i=v_0$, $k=v_\ell$, $j=v_{\ell+1}$, and $s=1$, which leads us to obtain
\[
  E_{v_0,v_{\ell+1}}(t)=E_{v_0,v_\ell}(1) E_{v_\ell,v_{\ell+1}}(t)E_{v_0,v_\ell}(-1) E_{v_\ell,v_{\ell+1}}(-t).
\]
Note that the right-hand side is a product of four matrices lying in $H_G(\R)$. Therefore we can conclude that $E_{v_0,v_{\ell+1}}(t)$ lies in $H_G(\R)$ for $t\in\R$.
\end{proof}

Let us now apply the above theory to our sparse neural network setting to prove the universal approximation theorem. For simplicity, we present the argument in the two-dimensional setting described above. It is, however, straightforward to verify that our approach extends to arbitrary spatial dimensions. Note that the weight matrix $W^{(\ell)}$ in the sparse layer \eqref{val_layer} can be reformulated using the above notation. More precisely, we shall define a simple undirected graph $G_h = (\mathcal{V}_h,E_h)$. For the set of edges $E_h$, we set the vertices $u=(i,j)$, $v=(p,q)\in\mathcal{V}_h$ to be adjacent if $v=(p,q)\in\mathcal{V}_{C_\ell}(i,j)$ for a prescribed constant $C_\ell>0$. The important first step is the following lemma. 
It follows immediately because, for fixed $C_\ell$ and mesh size $h > 0$, the entire domain $(0,1)^2$ can be covered by finitely many \emph{graph balls}
$$\mathcal{B}_G(u; C_\ell) := \{v \in \mathcal{V}_h \ : \ d_G(u, v) \le C_\ell\}.$$
By the patch construction \eqref{level_patch}, $\mathcal{B}_G(u; C_\ell)$ coincides with $\mathcal{V}_{C_\ell}(u)$, which in turn corresponds to $\mathcal{P}^{(C_\ell)}_u$.

\begin{lemma} \label{receptive_field}
    The graph $G_h$ is connected. In other words, for any two interior nodes $(i,j), (p,q) \in \mathcal{V}_h$, there exists a path of nodes $(i,j) = (i_0, j_0), (i_1, j_1), \cdots, (i_m, j_m) = (p,q)$
    such that $d_G(\eta_{i_{k+1}, j_{k+1}},\ \eta_{i_k, j_k}) \leq C_{\ell}$ for all $k=0,1,\cdots,m-1$.
\end{lemma}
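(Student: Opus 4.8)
The cleanest route is to exhibit an explicit connected spanning subgraph of $G_h$. I claim that $G_h$ contains the ordinary nearest-neighbor grid graph on the interior index box $\{1,\dots,n-1\}^2$, and since that grid graph is connected, so is $G_h$. This sidesteps any need to analyze the full combinatorics of the level neighborhoods $\mathcal{V}_{C_\ell}(i,j)$, and it is consistent with the informal ``covering by overlapping patches'' picture described above, since two patches overlapping in positive measure is exactly the adjacency relation defining $G_h$.

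I would carry this out in three steps. First, a monotonicity reduction. From the recursion \eqref{level_patch} one checks that $\mathcal{P}^{(t)}_{i,j}\subseteq\mathcal{P}^{(t+1)}_{i,j}$ for all $t\ge1$: the base case $\mathcal{P}^{(1)}_{i,j}\subseteq\mathcal{P}^{(2)}_{i,j}$ holds because $(i,j)$ itself contributes $\mathrm{supp}(\phi_{i,j})=\mathcal{P}^{(1)}_{i,j}$ to the union defining $\mathcal{P}^{(2)}_{i,j}$, and the inductive step is immediate since any patch meeting $\mathcal{P}^{(t-1)}_{i,j}$ in positive measure also meets the larger set $\mathcal{P}^{(t)}_{i,j}$. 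Consequently $\mathcal{V}_1(i,j)\subseteq\mathcal{V}_{C_\ell}(i,j)$ for every $C_\ell\ge1$, so it suffices to prove connectivity of the sparsest graph, i.e.\ the case $C_\ell=1$. Second, I would show that for $C_\ell=1$ every interior node $(i,j)$ is adjacent in $G_h$ to each of its interior grid neighbors $(i\pm1,j)$ and $(i,j\pm1)$: a horizontal or vertical grid edge is never removed by the diagonal cuts of the squares, so the edge joining two such nodes is a side of at least one mesh triangle $T$ that has both nodes as vertices, whence $T\subseteq\mathrm{supp}(\phi_{i,j})\cap\mathrm{supp}(\phi_{i+1,j})$, an intersection of positive area; thus $(i+1,j)\in\mathcal{V}_1(i,j)$, and similarly for the other three neighbors. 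Third, given arbitrary interior nodes $(i,j)$ and $(p,q)$, I would join them by the \emph{Manhattan path} that first varies the first coordinate from $i$ to $p$ at fixed height $j$ and then varies the second coordinate from $j$ to $q$ at fixed abscissa $p$. Every node along this path lies in $\{1,\dots,n-1\}^2$ and consecutive nodes are grid neighbors, hence adjacent in $G_h$ by the second step; this produces the path required by the statement.

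I do not anticipate a genuine obstacle: once the geometric fact in the second step is pinned down, the rest is elementary graph theory. The only mild care points are (a) the positive-measure overlap of neighboring nodal supports, which must be checked independently of how each square's diagonal is oriented --- but in either orientation one of the two triangles of the square separating the two nodes has both of them as vertices --- and (b) the bookkeeping that the Manhattan path never leaves the interior index box, and that nodes adjacent to $\partial D$ simply have fewer interior grid neighbors, which does not affect connectivity of the restricted grid graph. I would also remark that the argument is not specific to two dimensions: the same monotonicity reduction and the same ``grid graph is a connected spanning subgraph'' argument apply verbatim to the Cartesian simplicial mesh in any dimension $d$, with $\{1,\dots,n-1\}^d$ in place of $\{1,\dots,n-1\}^2$.
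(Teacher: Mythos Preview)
Your proof is correct and considerably more careful than the paper's. The paper dispatches the lemma in a single sentence placed \emph{before} its statement, asserting that it ``follows immediately'' because the domain $(0,1)^2$ is covered by finitely many graph balls $\mathcal{B}_G(u;C_\ell)$, which coincide with the neighborhoods $\mathcal{V}_{C_\ell}(u)$ via the patch construction. That is at best a heuristic: a finite covering of the domain by balls does not by itself force connectivity of the underlying graph, and the paper does not spell out the overlap chain that would complete the argument.

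Your route---reduce to $C_\ell=1$ by monotonicity of the patches $\mathcal{P}^{(t)}_{i,j}$, then observe that the nearest-neighbor grid graph on $\{1,\dots,n-1\}^2$ is a connected spanning subgraph of $G_h$, and finish with an explicit Manhattan path---is both more elementary and fully rigorous. It also makes the conclusion insensitive to how each square's diagonal is oriented and, as you point out, carries over verbatim to the $d$-dimensional Cartesian simplicial mesh. The paper's covering picture is shorter to state but leaves the actual geometric content (why the patches chain together) to the reader; your argument identifies that content as nothing more than grid-graph connectivity, which is the honest core of the lemma.
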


\begin{remark}[High order approximation]
The graph construction can be naturally extended for higher-order approximations.
We define the graph $G = (V, E)$ whose vertex set $V$ consists of all degrees of freedom (vertex, edge, face, and interior nodes).
Two nodes $u,v \in V$ are adjacent, i.e., $\{u, v\} \in E$, if the supports of their basis functions overlap on a set of positive measure.
Each edge has unit cost; the neighborhood $\mathcal{V}_{C_\ell}(u) = \{v\ : \ d_G(u,v) \le C_\ell\}$ and the graph balls are defined with respect to $d_G$.
For the piecewise linear approximation, this reduces to the vertex-adjacency graph used above.
\end{remark}

An important observation is that our sparse weight matrix $W^{(\ell)}$ in \eqref{val_layer} can be characterized by the graph $G_h$. To be more specific, if we construct the network sparsely according to the procedure proposed in this Section \ref{sec:method}, then our weight matrices $W^{(\ell)}$ are $G_h$-sparse for $\ell=1,2,\ldots, L$. Now, we are ready to prove the universal approximation property of our sparse network. Since the graph $G_h$ corresponding to the sparse layer defined above is connected and the finite element matrix $A$ is invertible, by Theorem \ref{thm:main_1}, we see that $A^{-1}$ can be represented as a product of $G_h$-sparse matrices. Based on this fact, we shall prove that our target function $x\mapsto A^{-1}x$ can be represented as a ReLU network with sparse layers.
In other words, each $G_h$-sparse matrix can be realized as a single affine layer of a neural network with the prescribed sparsity pattern, which allows us to translate the above matrix factorization into a neural network representation.

\begin{theorem}[Universal approximation for ReLU sparse networks]
\label{thm:G-ReLU-realization}
Let $M\in\mathrm{GL}_{N_h}(\mathbb{R})$ be an invertible matrix, and $K\subset\mathbb{R}^{N_h}$ be a nonempty compact set. Then there exists a ReLU neural network $\mathcal{N} : \mathbb{R}^{N_h} \to \mathbb{R}^{N_h}$ whose weight matrices are all $G_h$-sparse and satisfies
\[
  \mathcal{N}(x) = M x \quad \text{for all } x\in K.
\]
\end{theorem}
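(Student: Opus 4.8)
The plan is to reduce the statement to the factorization result proved above (Theorem~\ref{thm:main_1} together with the connectivity Lemma~\ref{receptive_field}) and then to realize the resulting product of sparse linear maps by a ReLU network in which the activation acts as the identity on the relevant range at every layer, so that the overall map is exactly linear.

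\textbf{Step 1 (factorization).} First I would invoke Lemma~\ref{receptive_field}, which says $G_h$ is connected, together with Theorem~\ref{thm:main_1}, to conclude $H_{G_h}(\mathbb{R}) = \mathrm{GL}_{N_h}(\mathbb{R})$. Since $M$ is invertible, this yields invertible $G_h$-sparse matrices $W_1,\dots,W_m$ with $M = W_m W_{m-1}\cdots W_1$. These will serve as the weight matrices of the network, so the $G_h$-sparsity requirement on the weights is automatic; only the biases will be tuned to $K$.

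\textbf{Step 2 (making ReLU transparent).} Set $K_0 = K$ and $K_k = W_k(K_{k-1})$ for $k=1,\dots,m$; each $K_k$ is compact, hence contained in a cube $[-R_k, R_k]^{N_h}$ for some $R_k>0$. I would then build an $m$-layer network with $z^{(0)} = x$ and
\[
z^{(\ell+1)} = \sigma\!\bigl(W_{\ell+1} z^{(\ell)} + b^{(\ell+1)}\bigr),\qquad \ell = 0,\dots,m-1,
\]
choosing the (unconstrained) bias vectors recursively by $b^{(\ell)} = c_\ell - W_\ell c_{\ell-1}$, where $c_0 = 0$ and $c_\ell = R_\ell \mathbf{1}$. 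A short induction shows that for every $x\in K$ the pre-activation at level $\ell$ equals $W_\ell\cdots W_1 x + c_\ell$, which lies in the nonnegative orthant by the choice of $R_\ell$; hence $\sigma$ acts as the identity there and $z^{(\ell)} = W_\ell \cdots W_1 x + c_\ell$, i.e.\ the recursion stays affine with an explicitly known shift.

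\textbf{Step 3 (removing the shift) and the main obstacle.} After $m$ hidden layers one has $z^{(m)} = Mx + c_m$ on $K$. To output exactly $Mx$ I would either append one affine output layer with weight $I_{N_h}$ (which is $G_h$-sparse, since diagonal entries are always permitted) and bias $-c_m$, or, if every layer must carry a ReLU, absorb the last factor instead by taking only $m-1$ hidden layers and a linear output $W_m z^{(m-1)} - W_m c_{m-1} = Mx$. Either way every weight matrix is $G_h$-sparse, every layer has width $N_h$, and $\mathcal{N}(x) = Mx$ on $K$; applying this with $M = A^{-1}$ gives the FEONet-relevant case. The substantive content is the factorization of Step~1, already delivered by Theorem~\ref{thm:main_1} and Lemma~\ref{receptive_field}; the only genuinely delicate point here is that the width is pinned to $N_h$ at every layer, so the textbook device $x = \sigma(x) - \sigma(-x)$ — which doubles the width — is unavailable, which is precisely why the uniform bias-shift argument of Step~2 (using compactness of $K$ and of each image $W_k\cdots W_1(K)$) is needed.
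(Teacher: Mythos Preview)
Your proof is correct and follows essentially the same route as the paper's: factor $M$ into $G_h$-sparse invertible matrices via Theorem~\ref{thm:main_1} and Lemma~\ref{receptive_field}, use compactness of the successive images of $K$ to choose biases making all pre-activations nonnegative so that ReLU acts as the identity, and then undo the accumulated shift with a final affine layer whose weight is the ($G_h$-sparse) identity. Your explicit recursion $b^{(\ell)} = c_\ell - W_\ell c_{\ell-1}$ with $c_\ell = R_\ell\mathbf{1}$ is a slightly tidier variant of the paper's coordinatewise choice $(b_\ell)_i > -\min_{x\in K}(M_\ell z^{(\ell-1)}(x))_i$, but the underlying idea is identical.
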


The idea of the proof is to implement each factor of $M$ by a small ReLU subnetwork that acts as the identity on the relevant compact subset of intermediate representations. The key observation is that ReLU coincides with the identity on the positive half-line.

\begin{proof}
Since $G_h$ is connected, Theorem \ref{thm:main_1} yields $H_{G_h}(\mathbb{R}) = {\rm{GL}}_{N_h}(\mathbb{R})$. Hence for any $M\in {\rm{GL}}_{N_h}(\mathbb{R})$ there exist $m\in\mathbb{N}$ and $G_h$-sparse matrices $M_1,\dots,M_m$ such that
\[
M = M_m M_{m-1}\cdots M_1 .
\]
We shall construct a depth-$(m+1)$ network by setting $z^{(0)}=x$, $z^{(\ell)}(x) := \sigma\!\bigl(M_\ell z^{(\ell-1)}(x) + b_\ell\bigr)$ for $\ell=1,\dots,m$, and $\mathcal{N}(x) := z^{(m)}(x) + b_{m+1}$, where $\sigma$ denotes the ReLU activation function applied componentwise. We choose the biases so that all pre-activations are strictly positive on $K$, forcing $\sigma$ to act as the identity on that regime. For $\ell=1$, for each coordinate $i$ define
\[
\alpha_{1,i} := \min_{x\in K} (M_1 x)_i,
\]
which exists by compactness of $K$ and continuity of a linear map. We shall choose $b_1\in\mathbb{R}^{N_h}$ such that $(b_1)_i > -\alpha_{1,i}$ for all $i$. Then we see that
$z^{(1)}(x)=M_1x+b_1$. Next, note that $z^{(\ell-1)}(K)$ is compact for all $\ell$. For each $i$, we set
\[
\alpha_{\ell,i} := \min_{x\in K} \bigl(M_\ell z^{(\ell-1)}(x)\bigr)_i,
\]
and choose $b_\ell$ with $(b_\ell)_i>-\alpha_{\ell,i}$ for all $i$. Then we have
$z^{(\ell)}(x)=M_\ell z^{(\ell-1)}(x)+b_\ell$. Therefore, for all $x\in K$, we obtain
\[
z^{(m)}(x)=M_m\cdots M_1 x \;+\; \sum_{k=1}^{m} M_m\cdots M_{k+1} b_k.
\]
Finally, by setting $b_{m+1}:=-\sum_{k=1}^{m} M_m\cdots M_{k+1} b_k$, we obtain the desired result.
\end{proof}

Theorem \ref{thm:G-ReLU-realization} means that any given invertible linear mapping can be represented exactly by a ReLU sparse network. A natural subsequent question is whether an analogous property holds for more general activation functions. In this case, as in the classical universal approximation theorem, we can approximate any given invertible linear mapping to arbitrary accuracy, which is encapsulated in the following theorem.

\begin{theorem}[Universal approximation for sparse networks with general activation]\label{thm:gen_act}
    Let $\sigma:\mathbb{R}\to\mathbb{R}$ be an activation function such that there exist
$t_0\in\mathbb{R}$ and an open interval $U$ containing $t_0$ such that $\sigma\in C^1(U)$ and $\sigma'(t_0)\neq 0$.
Then for every $M\in {\rm{GL}}_{N_h}(\mathbb{R})$, every nonempty compact set $K\subset\mathbb{R}^{N_h}$, and every $\varepsilon>0$,
there exists a finite-depth $\sigma$-network $\mathcal{N}:\mathbb{R}^{N_h}\to\mathbb{R}^{N_h}$ whose weight matrices are
$G_h$-sparse in every layer such that
\[
\sup_{x\in K}\|\mathcal{N}(x)-Mx\|_\infty < \varepsilon.
\]
\end{theorem}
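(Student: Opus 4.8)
The plan is to reduce Theorem~\ref{thm:gen_act} to the exact ReLU realization in Theorem~\ref{thm:G-ReLU-realization} by showing that, under the stated hypothesis on $\sigma$, a single sparse layer can approximate the identity map on any compact set to arbitrary accuracy while preserving $G_h$-sparsity. Concretely, I would first write $M = M_m M_{m-1}\cdots M_1$ as a product of invertible $G_h$-sparse matrices, exactly as in the proof of Theorem~\ref{thm:G-ReLU-realization}, using $H_{G_h}(\mathbb{R}) = \mathrm{GL}_{N_h}(\mathbb{R})$ from Theorem~\ref{thm:main_1}. The goal is then to mimic each linear factor $M_\ell$ by one $\sigma$-layer up to small error, and to chain these approximations together with a careful propagation-of-error estimate.

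The key technical device is the classical first-order linearization trick: since $\sigma \in C^1(U)$ with $\sigma'(t_0)\neq 0$, for small $\delta>0$ we have $\sigma(t_0 + \delta s) = \sigma(t_0) + \delta\,\sigma'(t_0)\, s + o(\delta)$ uniformly for $s$ in a bounded set. Thus, given a target affine map $y \mapsto M_\ell y$ restricted to a compact set, I would implement it by the sparse layer $y \mapsto \sigma\!\bigl(\delta\,[\sigma'(t_0)]^{-1} M_\ell y + t_0\mathbf{1}\bigr)$ followed by the affine post-processing $z \mapsto \delta^{-1}\sigma'(t_0)^{-1}\bigl(z - \sigma(t_0)\mathbf{1}\bigr)$; the scalar pre- and post-scalings do not change the support pattern, so the weight matrix of each layer remains $G_h$-sparse (indeed $\delta [\sigma'(t_0)]^{-1} M_\ell$ has the same zero pattern as $M_\ell$). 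Choosing $\delta$ small enough makes the discrepancy between this map and $y\mapsto M_\ell y$ less than any prescribed tolerance, uniformly on the relevant compact set. I would then absorb the affine post-processing into the next layer's pre-activation, exactly as biases were absorbed in the previous proof, so that the final network is genuinely a stack of $\sigma$-layers with $G_h$-sparse weights.

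The remaining step is bookkeeping: one defines the compact sets $K_0 = K$ and $K_\ell = $ (a fixed compact neighborhood of) the image of $K_{\ell-1}$ under the exact map $M_\ell$, so that all intermediate representations stay in a known compact region; this is where the uniformity of the linearization estimate is invoked. A standard Lipschitz-propagation argument — each $M_\ell$ and each post-scaling is a fixed linear map, hence Lipschitz — shows that if every layer is accurate to within $\delta'$ on its input compact set, the composed error at the output is at most $C\delta'$ for a constant $C$ depending only on $m$ and the norms $\|M_\ell\|$. Choosing $\delta'$ (equivalently $\delta$) small enough yields $\sup_{x\in K}\|\mathcal{N}(x) - Mx\|_\infty < \varepsilon$, completing the proof.

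I expect the main obstacle to be the careful handling of the scalings and bias absorption so that the $G_h$-sparsity is genuinely maintained in \emph{every} layer as claimed: the pre-scaling $\delta[\sigma'(t_0)]^{-1}$ is harmless (it is a scalar multiple of $M_\ell$, hence $G_h$-sparse), but the affine post-processing must be folded into the subsequent layer's weight matrix, and one must check that doing so does not create spurious nonzero entries outside the allowed pattern — this works because the post-processing is a scalar multiple of the identity plus a constant shift, and multiplying $M_{\ell+1}$ by a scalar and adding a constant to its pre-activation bias preserves both sparsity and the affine structure. The very last post-processing layer has no successor to absorb it, so $\mathcal{N}$ is taken to be the penultimate $\sigma$-layer followed by a final affine (linear) map, which is permitted in the definition of a $\sigma$-network. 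Everything else is routine compactness and continuity.
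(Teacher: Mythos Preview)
Your proposal is correct and follows essentially the same route as the paper: factor $M$ into $G_h$-sparse invertible matrices via Theorem~\ref{thm:main_1}, realize each factor by a two-affine $\sigma$-block using the $C^1$ linearization of $\sigma$ at $t_0$, merge adjacent affine maps so every weight matrix stays $G_h$-sparse, and bound the composed error by a Lipschitz propagation estimate over nested compact sets. The only slip is a harmless redundant $[\sigma'(t_0)]^{-1}$ in your block---drop one of the two copies so the limit is $M_\ell y$ rather than $[\sigma'(t_0)]^{-1}M_\ell y$ (the paper uses pre-activation $\delta M_\ell z + t_0\mathbf{1}$ and post-scaling $(\delta\sigma'(t_0))^{-1}$).
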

\begin{remark}
    The assumptions for the activation in Theorem \ref{thm:gen_act} are satisfied by most commonly used activation functions, including tanh, sigmoid, softplus, GELU, and Swish.
\end{remark}
\begin{proof}
    As before, since $G_h$ is connected, there exist $m\in\mathbb{N}$ and $G_h$-sparse matrices $M_1,\dots,M_m$ such that
\[
M = M_m M_{m-1}\cdots M_1 .
\]
For $\delta>0$, let us define 
\[
\phi_\delta(u) := \frac{\sigma(t_0+\delta u)-\sigma(t_0)}{\delta\,\sigma'(t_0)}.
\]
Since $\sigma\in C^1(U)$, from the first-order Taylor expansion, we see that for every $R>0$ and $\eta>0$, there exists sufficiently small $\delta>0$ such that 
\[\sup_{|u|\le R}|\phi_\delta(u)-u|<\eta.\]
If we define the componentwise extension $\Phi_\delta:\mathbb{R}^{N_h}\to\mathbb{R}^{N_h}$ by $\Phi_\delta(z)_i:=\phi_\delta(z_i)$, then we also have that for every $R>0$ and $\eta>0$, there exists $\delta>0$ such that
\begin{equation}\label{eq:Phi-close}
\sup_{\|z\|_\infty\le R}\|\Phi_\delta(z)-z\|_\infty \le \eta.
\end{equation}

Now, for each $\ell=1,\dots,m$ and $\delta>0$ define the two-layer $\sigma$-block
\begin{equation}\label{eq:Tblock}
\mathcal{T}_{\ell,\delta}(z)
:= \frac{1}{\delta\sigma'(t_0)}\Bigl(\sigma(\delta M_\ell z + t_0\mathbf{1})-\sigma(t_0)\mathbf{1}\Bigr),
\end{equation}
where $\mathbf{1}\in\mathbb{R}^{N_h}$ is the all-ones vector and $\sigma$ is applied componentwise. Note that the weight matrices for the layers $\delta M_\ell$ and $\frac{1}{\delta\sigma'(t_0)}I$ are both $G_h$-sparse.
Moreover, by definition, we see that $\mathcal{T}_{\ell,\delta}(z)=\Phi_\delta(M_\ell z)$. Therefore, for any $R>0$ and $\eta>0$, choosing small $\delta$ so that \eqref{eq:Phi-close} holds with radius
$\|M_\ell\|R$ yields
\begin{equation}\label{eq:block-approx}
\sup_{\|z\|_\infty\le R}\|\mathcal{T}_{\ell,\delta}(z)-M_\ell z\|_\infty
=\sup_{\|z\|_\infty\le R}\|\Phi_\delta(M_\ell z)-M_\ell z\|_\infty
\le \eta.
\end{equation}
Let us define the radii for each layer by
\[
R_0 := \sup_{x\in K}\|x\|_\infty <\infty,
\quad
R_\ell := \|M_\ell\|R_{\ell-1}+1 \quad (\ell=1,\dots,m).
\]
Let us also define the amplification constants
\[
A_\ell := \prod_{j=\ell+1}^{m}\|M_j\| \quad(\text{with }A_m=1),\quad
A_{\max}:=\max_{1\le \ell\le m}A_\ell.
\]
For given $\varepsilon>0$, we shall choose
\[
\eta := \min\Bigl\{1,\ \frac{\varepsilon}{m\,A_{\max}}\Bigr\}.
\]
For each $\ell$, from \eqref{eq:block-approx} with radius $R_{\ell-1}$ and $\eta$ to pick $\delta_\ell>0$ such that
\begin{equation}\label{eq:each-layer}
\sup_{\|z\|_\infty\le R_{\ell-1}}\|\mathcal{T}_{\ell,\delta_\ell}(z)-M_\ell z\|_\infty \le \eta.
\end{equation}
We now define the network
\[
\mathcal{N} := \mathcal{T}_{m,\delta_m}\circ \cdots \circ \mathcal{T}_{1,\delta_1}.
\]
Note that each block $\mathcal{T}_{\ell,\delta_\ell}$ can be written as
$\mathcal{T}_{\ell,\delta_\ell}=A^{(2)}_\ell\circ \sigma \circ A^{(1)}_\ell$ with affine maps
$A^{(1)}_\ell(z)=\delta_\ell M_\ell z+t_0\mathbf{1}$ and
$A^{(2)}_\ell(y)=\frac{1}{\delta_\ell\sigma'(t_0)}y-\frac{\sigma(t_0)}{\delta_\ell\sigma'(t_0)}\mathbf{1}$.
Hence the composition $\mathcal{N}:=\mathcal{T}_{m,\delta_m}\circ\cdots\circ \mathcal{T}_{1,\delta_1}$
is a standard feedforward $\sigma$-network obtained by merging consecutive affine maps $A^{(1)}_{\ell+1}\circ A^{(2)}_\ell$ into a single affine map, where the $G_h$-sparsity is preserved.

Now, let us fix $x\in K$ and define the exact and approximate trajectories, respectively by
\[
z^{(0)}:=x,\quad z^{(\ell)}:=M_\ell z^{(\ell-1)};
\qquad
\tilde z^{(0)}:=x,\quad \tilde z^{(\ell)}:=\mathcal{T}_{\ell,\delta_\ell}(\tilde z^{(\ell-1)}).
\]
By definition, for any $\ell=1,\ldots,m$, we see that
\[
\|\tilde z^{(\ell)}\|_\infty
\le \|M_\ell \tilde z^{(\ell-1)}\|_\infty + \eta
\le \|M_\ell\|R_{\ell-1}+1 = R_\ell.
\]
Thus \eqref{eq:each-layer} applies at every stage along the approximate trajectory. Next, for the error $e_\ell:=\|\tilde z^{(\ell)}-z^{(\ell)}\|_\infty$,
\begin{align*}
e_\ell
&= \bigl\|\mathcal{T}_{\ell,\delta_\ell}(\tilde z^{(\ell-1)}) - M_\ell z^{(\ell-1)}\bigr\|_\infty \\
&\le \bigl\|\mathcal{T}_{\ell,\delta_\ell}(\tilde z^{(\ell-1)}) - M_\ell \tilde z^{(\ell-1)}\bigr\|_\infty
     + \|M_\ell(\tilde z^{(\ell-1)}-z^{(\ell-1)})\|_\infty \\
&\le \eta + \|M_\ell\| e_{\ell-1},
\end{align*}
for $\ell=1,\ldots,m$ where we used \eqref{eq:each-layer} with $\|\tilde z^{(\ell-1)}\|_\infty\le R_{\ell-1}$. 
Iterating this recursion yields
\[
\|\mathcal{N}(x)-Mx\|_\infty=e_m \le \sum_{\ell=1}^m A_\ell\,\eta \le m\,A_{\max}\,\eta \le \varepsilon,
\]
which completes the proof.
\end{proof}

\begin{figure}[t]
    \centering
      
    \begin{subfigure}{0.23\textwidth}
        \centering
        \includegraphics[width=\linewidth]{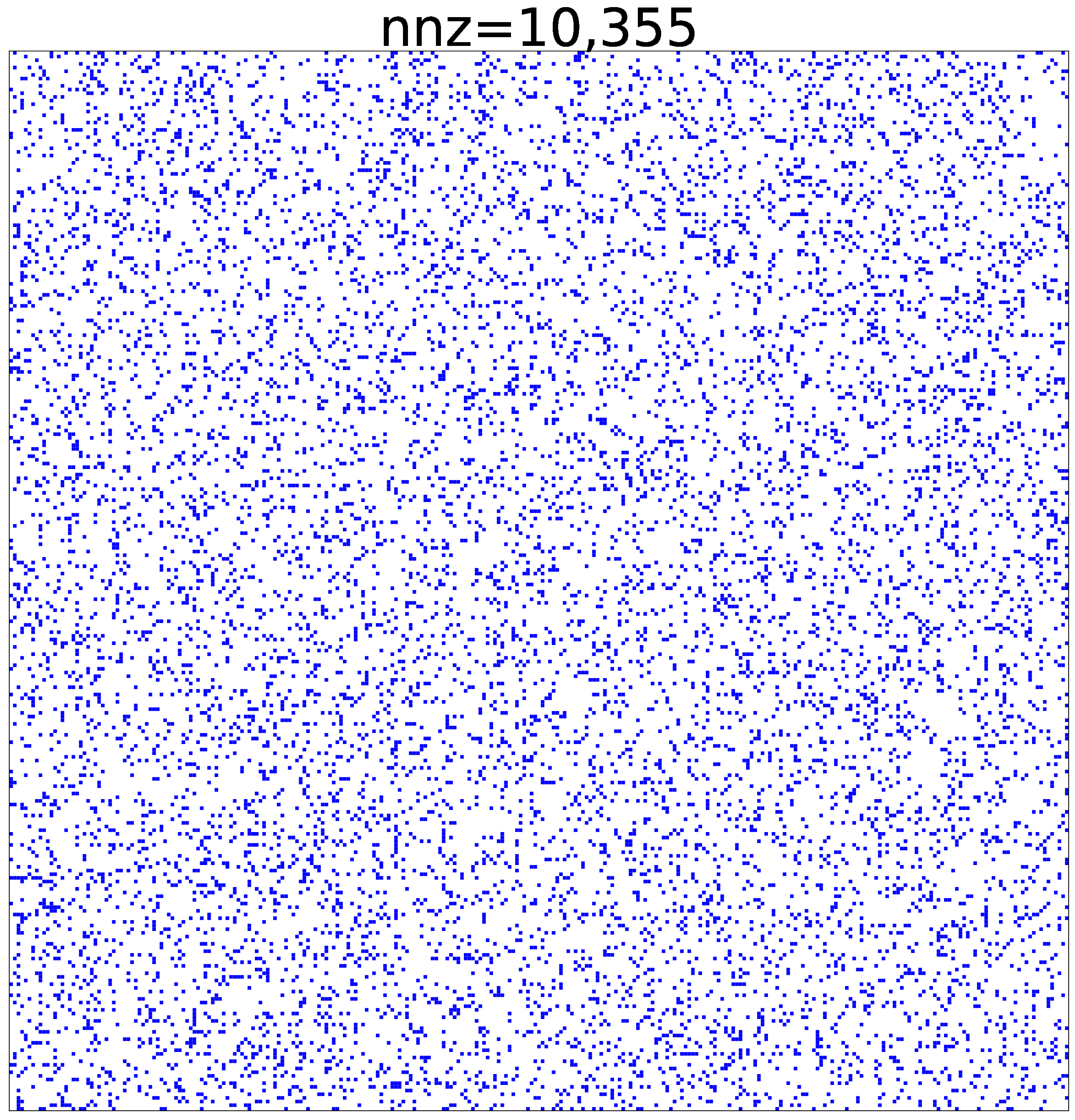}
        \caption{}
        \label{fig:16_random}
    \end{subfigure}
    \begin{subfigure}{0.23\textwidth}
        \centering
        \includegraphics[width=\linewidth]{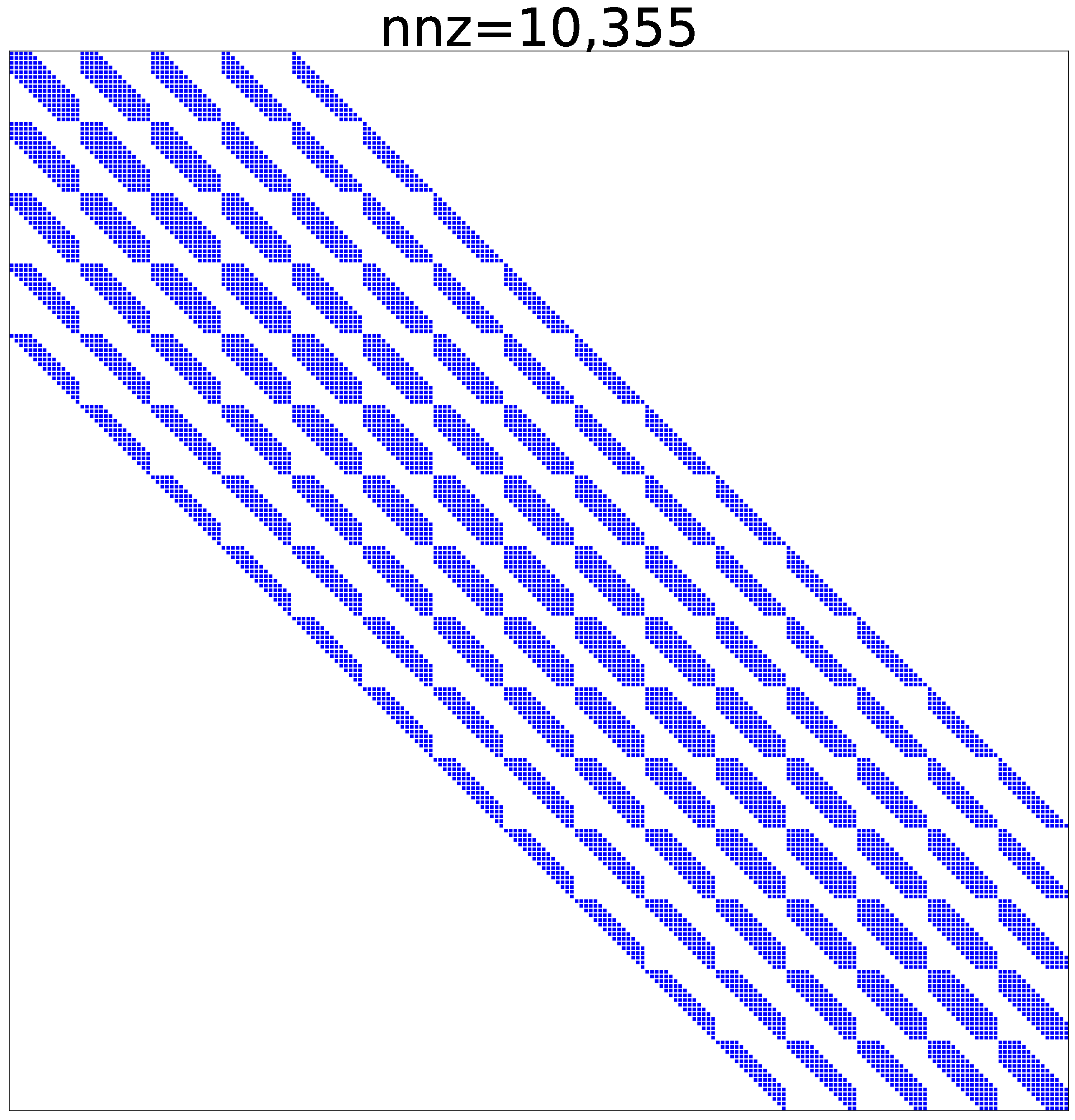}
        \caption{}
        \label{fig:16_local}
    \end{subfigure}
    \begin{subfigure}{0.23\textwidth}
        \centering
        \includegraphics[width=\linewidth]{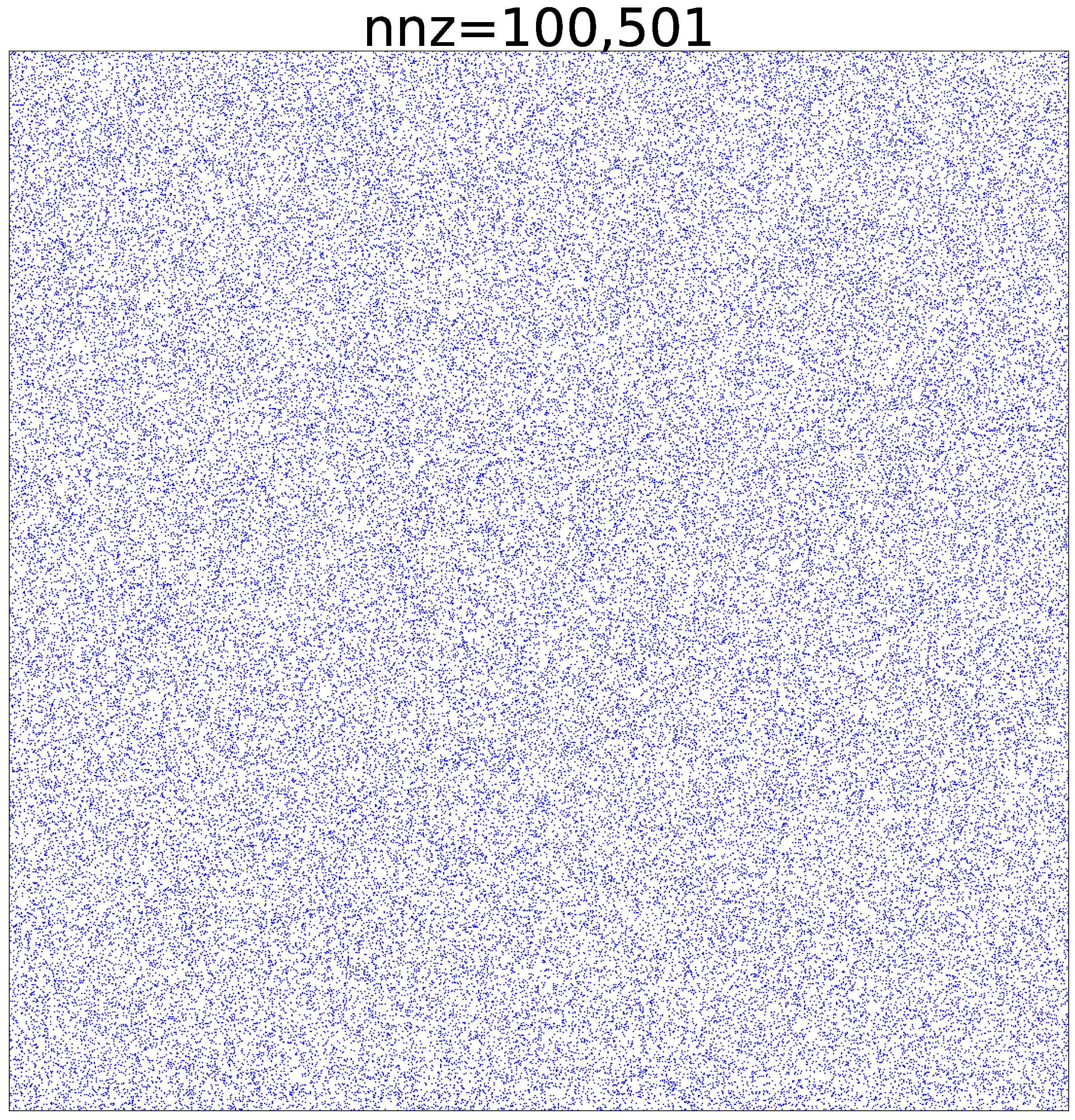}
        \caption{}
        \label{fig:32_random}
    \end{subfigure}
    \begin{subfigure}{0.23\textwidth}
        \centering
        \includegraphics[width=\linewidth]{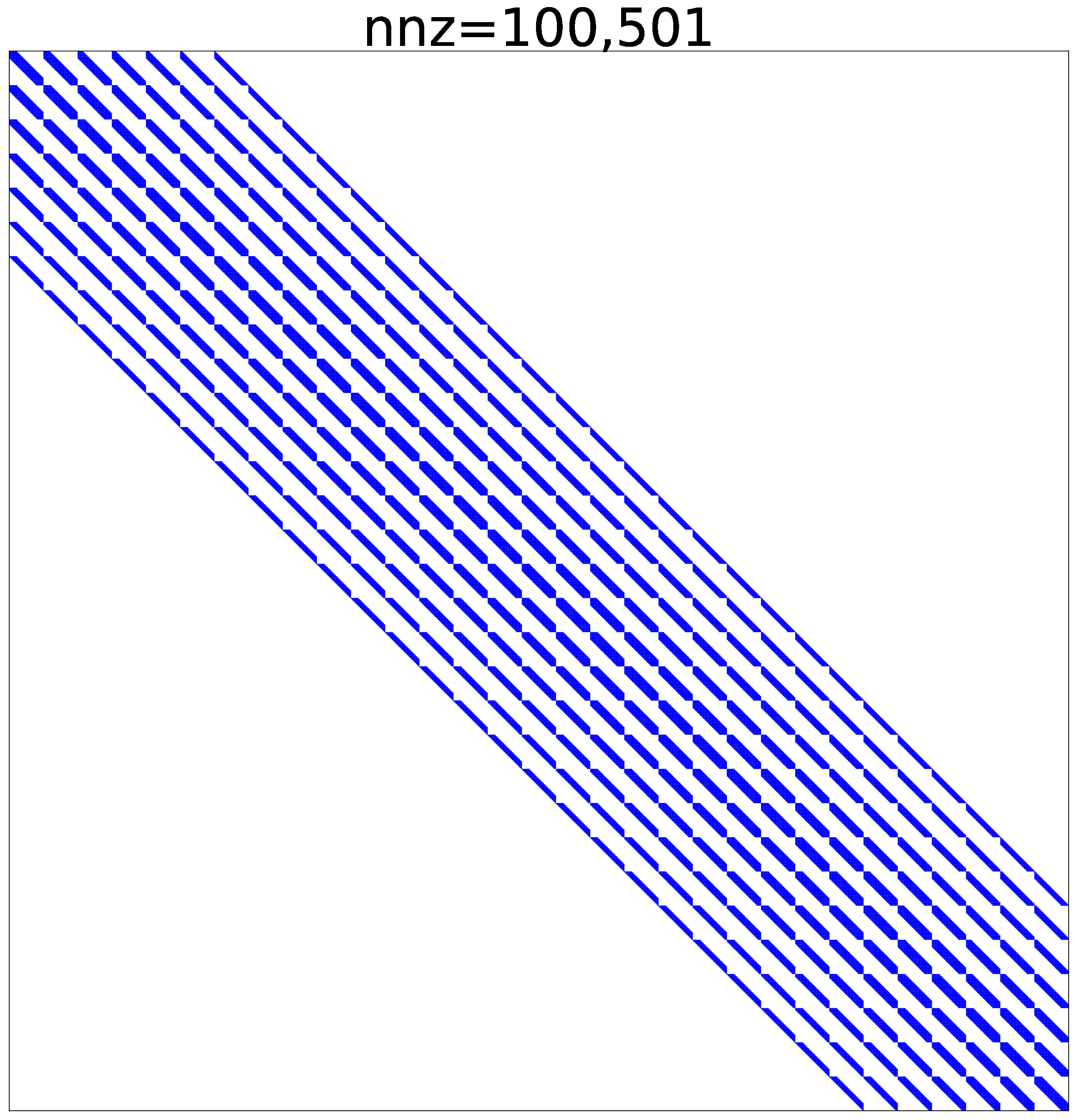}
        \caption{}
        \label{fig:32_local}
    \end{subfigure}
    \caption{
    Sparse weight matrix patterns: (a) random ($N_h=225$), (b) our method ($N_h = 225$, $C_\ell = 4$), (c) random ($N_h = 961$), and (d) our method ($N_h = 961$, $C_\ell = 6$). The plot of random connectivity patterns is selected from one of the 10 random seeds.
    }
    \label{fig:connectivity_test}
\end{figure}

As we can see from Theorem \ref{thm:G-ReLU-realization} and Theorem \ref{thm:gen_act}, under a given mesh size $h$ and a fixed connectivity constant $C_{\ell}$, the connectivity of the underlying graph plays a crucial role in guaranteeing the above results. In particular, when the graph is connected so that information from the input can propagate to the final layer, we are able to establish a universal approximation property. From this viewpoint, it is natural to wonder whether one might lose such desirable guarantees if, instead of designing the network as proposed here, one imposes sparsity in an ad hoc manner to construct a sparse neural network. To investigate this, we performed some numerical experiments. For $N_h=225$ and $N_h=961$, we constructed two sparse networks, one using our proposed method and the other using a randomly imposed sparsity pattern, and then carried out the FEONet experiments with both architectures. Figure \ref{fig:connectivity_test} illustrates the structure of the weight matrices used in each case. The blue entries indicate nonzero values, whereas the remaining white entries correspond to zeros, yielding sparse matrices. On a uniform mesh, the proposed sparse weight matrices have a banded structure, which is shown in (b) and (d) of Figure~\ref{fig:connectivity_test}.
For a fair comparison, we construct a random sparse weight matrix with the same number of nonzero entries as the proposed one, and we exclude any case with an all-zero row or an all-zero column (see (a) and (c) of Figure~\ref{fig:connectivity_test}). 

As we can see from Table \ref{tab:comparison_connectivity}, the sparsity pattern introduced by the supports of the FEM basis functions yields substantially higher efficiency than a random sparse pattern. For each $N_h$, we generated 10 different random sparse weight matrices and built ten corresponding networks. These were trained for 10,000 epochs under exactly the same settings as the sparse FEONet. The relative \(\ell_2\) Errors are calculated with the finite element solution at the same $N_h$ as the reference solution. This confirms that the connectivity condition discussed above plays a central role, both in the theory and in practical numerical performance.

\begin{table}[t]
\centering 
\caption{Comparison of connection strategies between random and our FEM-based local connectivity on \(N_h=225\) and \(N_h=961\). Relative \(\ell_2\) Errors are calculated on the test set based on the finite element solution at the same resolution $n$. For the random connectivity, the reported error is the mean of 10 different random seed tests. FEM-based connectivity shows more stable and accurate convergence than random connectivity.}
\label{tab:comparison_connectivity}  
\scalebox{0.8}{
\begin{tabular}{@{\hspace{20pt}} c @{\hspace{25pt}} c @{\hspace{40pt}} c @{\hspace{25pt}} c @{\hspace{25pt}}}
    \toprule
    \textbf{\(N_h\)} &
    \textbf{Connection Strategy} &
    \textbf{Number of Connection} &
    \textbf{Rel. \(\ell_2\) Err.} \\
    \midrule
    
    \multirow{2}{*}{225} & FEM-Based local connection & $C_\ell=4$ & \textbf{0.00067} \\
     & Random connection & - & 0.08217 \\
    \midrule

    \multirow{2}{*}{961} & FEM-Based local connection & $C_\ell=6$& \textbf{0.00058} \\
     & Random connection & - & 0.04473 \\
    \bottomrule
    
    \end{tabular}
}
\end{table}

\subsection{Stability}
In this section, in order to further highlight the efficiency of the proposed method, we present a theoretical study of the network's stability in training and inference.
The analysis relies on layer-wise operator norms and on an activation with Lipschitz continuity, and it yields stability bounds with explicit dependence on depth and resolution.
We compare dense (FC) and sparse connectivity and find that the sparse network preserves locality and yields stronger stability guarantees. We shall also present some brief numerical tests to support the theoretical result.

By the Marchenko--Pastur law \cite{p_law}, if each entry of $W\in\mathbb{R}^{N\times N}$ is independently identically distributed random variables with mean 0 and variance $\sigma^2 < \infty$, i.e., $W_{ij} \sim \mathcal{N}(0,\sigma^2)$ for all $i,j\in\{1,2,\cdots,N\}$, it is known, with high probability, that
\begin{equation}\label{Marchenko_estimate}
    \|W\|_2 \approx 2\sigma \sqrt{N}.
\end{equation}
On the other hand, the key observation is that for a sparse matrix $W \in\mathbb{R}^{N\times N}$ constructed via \eqref{sparse_weight}, we have
\begin{equation}\label{sparse_bound}
    \|W\|_2 \le (\|W\|_1 \|W\|_\infty)^{1/2} \le  \omega \gamma = \mathcal{O}(1),
\end{equation}
where $\gamma =\max_{i,j} |\mathcal{V}_{C_\ell}(i,j)|$ denotes the maximum number of nonzero entries per row.
Note that $\gamma$ depends on the constant $C_\ell$ and the mesh structure, but it is independent of $N$. With this in mind, we compare the stability of the dense (FC) network with that of our proposed sparse network, which is encapsulated in the following theorem.

\begin{theorem} [Stability] \label{stability_thm}
Let $\mathcal{N}_L= \Phi_L \circ \cdots \circ \Phi_1: \mathbb{R}^{N_h} \to \mathbb{R}^{N_h}$ be an $L$-layer neural network with
\[
    \Phi_\ell(x) = \sigma(W^{(\ell)}x + b^{(\ell)}), \quad\ell=1,\ldots,L,
\]
where the activation $\sigma$ is Lipschitz continuous with the Lipschitz constant $L_\sigma$, and the weight matrix $W^{(\ell)}$ is either a dense matrix (FC) or a sparse matrix as suggested in \eqref{sparse_weight} whose entries are drawn i.i.d. from $\mathcal{N}(0,\sigma^2)$. Then, for an input $f\in \mathbb{R}^{N_h}$ and the perturbed input $\hat{f} = f + \delta(f)$, we have
    \begin{equation} \label{stability_estimate}
        \|\mathcal{N}_L(f) - \mathcal{N}_L(\hat{f})\|_2 \le C_S \|\delta(f)\|_2,
    \end{equation}
where $C_S = \mathcal{O}((N_h)^\frac{L}{2})$ for the FC case and $C_S = \mathcal{O}(1)$ for the sparse case.
\end{theorem}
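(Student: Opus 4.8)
The plan is to propagate the input perturbation $\delta(f)$ through the network one layer at a time, using that each layer map $\Phi_\ell$ is the composition of an affine map with a componentwise $L_\sigma$-Lipschitz nonlinearity, hence is Lipschitz in the Euclidean norm with constant at most $L_\sigma\|W^{(\ell)}\|_2$. Concretely, for any $x,y\in\mathbb{R}^{N_h}$,
\[
\|\Phi_\ell(x)-\Phi_\ell(y)\|_2 \le L_\sigma\,\|W^{(\ell)}(x-y)\|_2 \le L_\sigma\,\|W^{(\ell)}\|_2\,\|x-y\|_2,
\]
where the first inequality uses that $\sigma$ acts componentwise and the second is the definition of the operator ($\ell^2\to\ell^2$) norm. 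Setting $x_0=f$, $\hat x_0=\hat f$ and composing this estimate over $\ell=1,\dots,L$ (a trivial induction/telescoping), and using $x_0-\hat x_0=-\delta(f)$, I obtain
\[
\|\mathcal{N}_L(f)-\mathcal{N}_L(\hat f)\|_2 \le L_\sigma^{\,L}\Bigl(\prod_{\ell=1}^{L}\|W^{(\ell)}\|_2\Bigr)\|\delta(f)\|_2 ,
\]
so that \eqref{stability_estimate} holds with $C_S = L_\sigma^{\,L}\prod_{\ell=1}^L\|W^{(\ell)}\|_2$. It then only remains to bound this product in the two cases.

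For the dense (FC) case I would invoke the interpolation bound \eqref{dense_bound}: assuming the weights remain bounded, $\max_{i,j}|W^{(\ell)}_{ij}|\le\omega$ with $\omega$ independent of $N_h$, each of the $N_h$ rows and columns of $W^{(\ell)}$ contributes at most $\omega N_h$ to the corresponding absolute sum, so $\|W^{(\ell)}\|_2\le(\|W^{(\ell)}\|_1\|W^{(\ell)}\|_\infty)^{1/2}\le\omega N_h$. Hence $C_S\le(L_\sigma\omega)^L N_h^{\,L}=\mathcal{O}(N_h^{\,L})$, the prefactor being fixed once $L$, $L_\sigma$, $\omega$ are fixed. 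One may additionally remark, via the Marchenko--Pastur law \eqref{Marchenko_estimate}, that even for random i.i.d.\ weights $\|W^{(\ell)}\|_2$ is already of order $\sqrt{N_h}$, so the growth with $N_h$ is intrinsic and not merely an artifact of the worst-case estimate.

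For the sparse case, the $G_h$-sparsity constraint \eqref{sparse_weight} forces each row of $W^{(\ell)}$ indexed by $(i,j)$ to have at most $|\mathcal{V}_{C_\ell}(i,j)|\le\gamma$ nonzero entries, and by symmetry of the adjacency relation defining $G_h$ the same bound holds for each column, where $\gamma=\max_{i,j}|\mathcal{V}_{C_\ell}(i,j)|$. Therefore $\|W^{(\ell)}\|_1,\|W^{(\ell)}\|_\infty\le\omega\gamma$, and \eqref{sparse_bound} yields $\|W^{(\ell)}\|_2\le\omega\gamma$, whence $C_S\le(L_\sigma\omega\gamma)^L=\mathcal{O}(1)$ uniformly in $N_h$. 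Combining the two cases finishes the proof.

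The layerwise Lipschitz composition is routine; the only point requiring genuine care is the claim that $\gamma$ is bounded independently of $N_h$, which is precisely where the finite-element structure is used. For the uniform triangulation of Section~\ref{sec:method} this can be made explicit (for instance $|\mathcal{V}_1(i,j)|\le 7$ in two dimensions, and more generally $|\mathcal{V}_{C_\ell}(i,j)|=\mathcal{O}(C_\ell^{\,d})$), and for general shape-regular meshes it follows from the bounded-overlap property of the nodal patches that the number of degrees of freedom within graph distance $C_\ell$ of a fixed node is uniformly bounded. It is also worth flagging that the hypothesis $\max_{i,j}|W^{(\ell)}_{ij}|\le\omega$ — i.e.\ that the trained weights stay bounded — is what makes the per-layer bounds $N_h$-uniform in $\omega$; the resulting dichotomy $\|W^{(\ell)}\|_2=\mathcal{O}(N_h)$ (dense) versus $\mathcal{O}(1)$ (sparse) is exactly what propagates into the $\mathcal{O}(N_h^{\,L})$ versus $\mathcal{O}(1)$ contrast for $C_S$.
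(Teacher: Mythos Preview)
Your argument is correct and matches the paper's proof essentially step for step: a layerwise Lipschitz bound $\|\Phi_\ell(x)-\Phi_\ell(y)\|_2\le L_\sigma\|W^{(\ell)}\|_2\|x-y\|_2$, composed over $\ell=1,\dots,L$, followed by the operator-norm estimates \eqref{dense_bound} and \eqref{sparse_bound} to distinguish the FC and sparse cases. Your additional remarks on why $\gamma$ is $N_h$-independent and on the implicit boundedness assumption for the entries are helpful elaborations that the paper leaves to the preceding discussion.
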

\begin{proof}
For each $\ell=1,\ldots,L$, let us write $C_{W^{(\ell)}} = \|W^{(\ell)}\|_2$. From the Lipshcitz continuity of $\sigma$, we see that
    \begin{align*}
        \|\Phi_\ell(x_1) - \Phi_\ell(x_2)\|_2 &\le L_\sigma  \|W^{(\ell)}(x_1 - x_2)\|_2 \le L_\sigma \|W^{(\ell)}\|_2 \|x_1 - x_2\|_2  \le L_\sigma C_{W^{(\ell)}} \|x_1 - x_2\|_2,
    \end{align*}
    for all $x_1, x_2 \in \mathbb{R}^{N_h}$.
    Therefore, by iteration, it follows that
    \begin{align*}
        \|\mathcal{N}_L(f) - \mathcal{N}_L(\hat{f})\|_2 &= \|\Phi_L \circ \cdots \circ \Phi_1(f) - \Phi_L \circ \cdots \circ \Phi_1(\hat{f})\|_2 \nonumber \\
        &\le L_\sigma C_{W^{(L)}} \|\Phi_{L-1} \circ \cdots \circ \Phi_1(f) - \Phi_{L-1} \circ \cdots \circ \Phi_1(\hat{f})\|_2 \nonumber \\
        &\le \cdots \le (L_\sigma)^L \left(\prod_{\ell=1}^L C_{W^{(\ell)}}\right) \|f - \hat{f}\|_2. 
    \end{align*}
    Note that, from \eqref{Marchenko_estimate} and \eqref{sparse_bound}, we see that $C_{W^{(\ell)}}=\mathcal{O}(N_h^{\frac{1}{2}})$ for the FC case, and $C_{W^{(\ell)}}=\mathcal{O}(1)$ for the sparse case, which completes the proof.
\end{proof}

\begin{figure}[!t]
\begin{center}
    \includegraphics[width=0.49\textwidth]{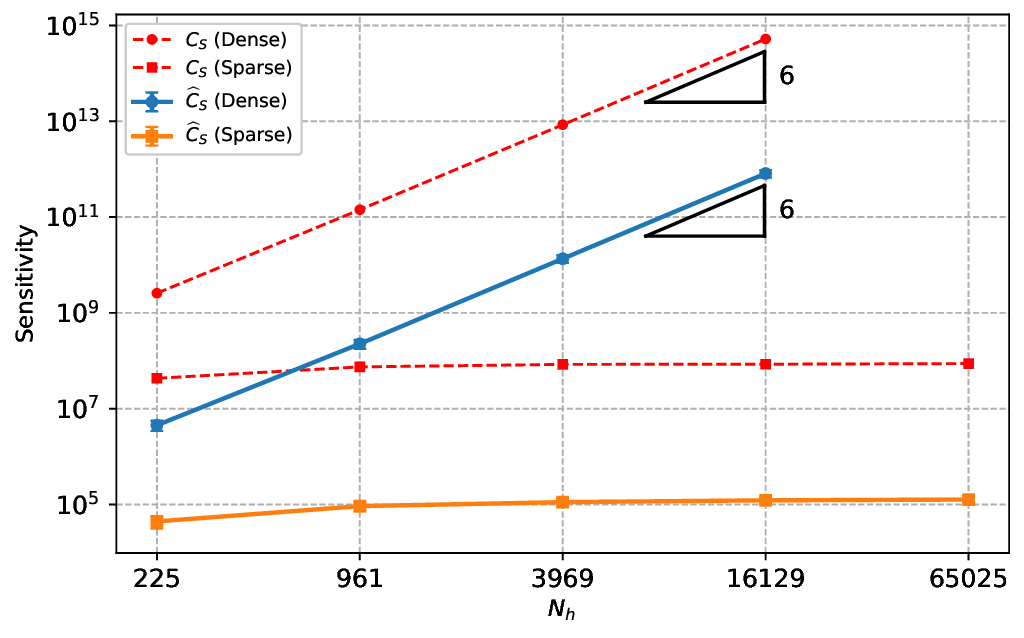}
    \caption{
    Dense vs. sparse stability ($L = 6$) with i.i.d. Gaussian weights. Curves show the empirical sensitivities and the theoretical upper bounds. The dense case at $N_h = 65,025 \ (= 255^2)$ is omitted due to memory limits.
    }
    \label{fig:stability_result}
\end{center}
\end{figure}

\begin{remark}
For a fixed $L$, as presented in Figure~\ref{fig:stability_result}, FC networks may become numerically unstable as $N_h$ increases, while sparse networks remain numerically stable. We note that, since the operator norm of each layer controls signal amplification during both forward and backward propagation, a large global Lipschitz constant $C_S$ in \eqref{stability_estimate} can make the FC network numerically ill-conditioned. For the sparse architecture, the error bound is independent of $h$ since $C_S = \mathcal{O}(1)$. However, it may grow with $N_h$ for the FC architecture because $C_S = \mathcal{O}((N_h)^{\frac{L}{2}})$.
We note that, for commonly used activation functions such as ReLU, tanh, softplus, GELU, and Swish, the Lipschitz constant is uniformly bounded by a moderate constant; 
therefore, it does not affect the asymptotic dependence on $N_h$.
\end{remark}

\begin{table}[!t]
\centering
\caption{Comparison of the stability upper bound and the spectral norm of the weight layer for varying $N_h$. The weights are initialized by Gaussian random sampling and evaluated without training. The norm of input perturbation is set to 1\% of the maximum norm of 3{,}000 input data samples. The results show that the spectral norm of each weight layer in the FC model is increasing when $N_h$ is increasing, whereas that of the sparse model remains nearly constant.}
\label{tab:stability_gaussian_random_weight}

\renewcommand{\arraystretch}{1.2}

\begin{tabular}{c | c | r r r r r r | c}
    \hline  
    \textbf{$N_h$} &
    \textbf{Network} &
    \textbf{$C_{W^{(1)}}$} &
    \textbf{$C_{W^{(2)}}$} &
    \textbf{$C_{W^{(3)}}$} &
    \textbf{$C_{W^{(4)}}$} &
    \textbf{$C_{W^{(5)}}$} &
    \textbf{$C_{W^{(6)}}$} &
    \textbf{$\widehat{C}_S$}\\
    \hline

    \multirow{2}{*}{$15^2$}
    & Dense & 33.954 & 33.331 & 33.971 & 33.505 & 33.761 & 33.417 & $4.52 {\scriptstyle (\pm 1.08)} \times10^{6}$ \\
    & Sparse ($C_\ell =5$) & 17.078 & 17.185 & 16.843 & 16.683 & 17.593 & 16.645 & $4.44 {\scriptstyle (\pm 1.27)}\times10^{4}$\\
    \hline

    \multirow{2}{*}{$31^2$}
    & Dense & 66.012 & 65.708 & 65.677 & 65.629 & 65.395 & 65.935 & $2.26 {\scriptstyle (\pm 0.48)} \times10^{8}$ \\
    & Sparse ${(C_\ell =5)}$ & 19.120 & 18.348 & 18.821 & 18.676 & 18.590 & 18.783 & $9.26{\scriptstyle (\pm 1.99)}\times10^{4}$ \\
    \hline
    
    \multirow{2}{*}{$63^2$}
    & Dense & 130.070 & 129.618 & 129.703 & 129.625 & 129.937 & 129.819 & $1.35 {\scriptstyle (\pm 0.24)} \times10^{10}$ \\
    & Sparse ${(C_\ell =5)}$ & 18.743 & 19.073 & 18.895 & 18.910 & 19.045 & 19.028 & $1.12 {\scriptstyle (\pm 2.06)} \times10^{5}$ \\
    \hline
    
    \multirow{2}{*}{$127^2$}
    & Dense & 257.952 & 257.901 & 257.705 & 257.755 & 257.821 & 257.732 & $8.12 {\scriptstyle (\pm 1.47)} \times10^{11}$ \\
    & Sparse ${(C_\ell =5)}$ & 19.152 & 19.091 & 19.001 & 19.163 & 19.012 & 19.011 & $1.22 {\scriptstyle (\pm 2.14)} \times10^{5}$ \\
    \hline
    
    \multirow{2}{*}{$255^2$}
    & Dense & \multicolumn{1}{c}{--} & \multicolumn{1}{c}{--} & \multicolumn{1}{c}{--} & \multicolumn{1}{c}{--} & \multicolumn{1}{c}{--} & \multicolumn{1}{c|}{--} & -- \\
    & Sparse ${(C_\ell =5)}$ & 19.126 & 19.195 & 19.148 & 19.068 & 19.191 & 19.126 & $1.26 {\scriptstyle (\pm 2.25)} \times10^{5}$ \\
    \hline
    
    \end{tabular}
\end{table}

In order to demonstrate Theorem~\ref{stability_thm}, we test depth $L = 6$ under two connectivities: FC layer and sparse layer that satisfies \eqref{sparse_weight}.
We consider both untrained i.i.d. Gaussian weights and trained weights, under identical architectures and training conditions.
We generated 3,000 input samples and added random noise with a magnitude with $1\%$ of the maximum norm of the input dataset. We then compared the model outputs for the original and perturbed inputs.
We define the empirical sensitivity by
$$\widehat{C}_S(f,\delta) = \frac{\|\mathcal{N}_L(f) - \mathcal{N}_L(\hat{f})\|_2}{\|\delta(f)\|_2}.$$
For each resolution $N_h$, we report the mean (and standard deviation) of $\widehat{C}_S$ over the 3,000 samples and compare it with the theoretical upper bound $C_S$.

By the Marchenko--Pastur estimate \eqref{Marchenko_estimate}, each $C_{W^{(\ell)}}$ for the FC network doubles when $N_h$ is quadrupled. So the right-hand side in \eqref{stability_estimate} increases by a factor $2^{L} = 2^6$.
Figure~\ref{fig:stability_result} and Table~\ref{tab:stability_gaussian_random_weight} confirm that the upper bound $C_S$ grows with slope $\approx 6$ on a log scale, whereas the empirical sensitivity remains strictly below the upper bound for all $N_h$. We note that the FC network could not be instantiated due to memory limitations at $N_h = 255^2$. For the sparse network, $C_\ell = \mathcal{O}(1)$ is independent of $N_h$. 

\section{Numerical Experiments}\label{sec:exp}
In this section, we present the experimental results comparing our proposed sparse network with the FC neural network within the FEONet framework. We tested various PDEs with Dirichlet boundary conditions, from coarse to fine mesh resolutions. 

For each experiment, we utilized 3,000 training samples and 3,000 test samples from randomly generated external forcing terms. As the mesh becomes finer, the number of trainable parameters increases significantly, which leads to memory bottlenecks and computational infeasibility in FC architectures. However, our model overcame these problems, improving efficiency and achieving higher accuracy even in a high-resolution regime. To evaluate the robustness of our sparse model across diverse geometries, we tested the model not only on structured triangular meshes in a square domain, but also on irregular triangular meshes in both square and circular-hole domains generated via FEniCS \cite{fenics}. These tests demonstrate that the proposed model shows strong performance across diverse domains and mesh geometries. Both the FC model and the proposed sparse model were trained under identical settings on the same dataset for a fair comparison. We increased the connectivity step by step to find out the minimum number of connections needed to achieve accuracy comparable to the fully connected baseline model. Each model employed five hidden layers and was optimized using the Adam optimizer. We used the Swish activation function and applied a cosine decay scheduler to gradually reduce the learning rate for better training convergence. The training was performed in an unsupervised manner by minimizing the weak-form residual.

For the performance evaluation of the proposed method, we first measure the relative $L^2$ error and the relative $H^1$ semi-norm error against the FEM solution $u_h^*$ on a fine mesh ($n=1,024$) which can be regarded as a true solution. To be more specific, we measure the following errors:
$$\textrm{Rel. $L^2$ Err.} := \frac{\|u_{\textrm{pred}} -  u_h^*\|_{L^2(\Omega)}}{\|u_h^*\|_{L^2(\Omega)}}, \quad \textrm{Rel. $H^1$ Semi Err.} := \frac{\|\nabla u_{\textrm{pred}} - \nabla u_h^*\|_{L^2(\Omega)}}{\|\nabla u_h^*\|_{L^2(\Omega)}}.$$
In addition, to measure the error introduced by incorporating neural-network approximation into the original FEM, we also evaluated the accuracy of the FEM solution computed on the same mesh as the solution predicted by FEONet. These values are shown in parentheses in the tables below. Moreover, we measured the memory usage of trainable parameters to evaluate computational efficiency.

Our model was implemented in JAX(v0.4.7), using Flax and Optax libraries. All experiments were conducted on a workstation with a single NVIDIA RTX 3090 GPU(24GB VRAM), running CUDA 11.4 and CUDNN 8.2.4.

\subsection{Advection-diffusion-reaction equation}

For the basic performance evaluation, we first consider the 2D advection-diffusion-reaction equation defined as
\begin{equation}
    \begin{aligned}
     -0.1\ \Delta u(x,y) + a\cdot\nabla u(x,y) + 20\ u(x,y)&= f(x,y) && (x,y) \in \Omega, \\
     u(x,y) &= 0 && (x,y) \in \partial \Omega,
\end{aligned}
\end{equation}

\noindent
where \( \Omega=[-1, 1]^2\), \(a=(-1,0)^T\). Moreover, we set external forces as inputs of neural networks, which are given by
\begin{equation}\label{f_input}
    f(x,y)=m_0\sin(n_0x+n_1y)+m_1\cos(n_2x+n_3y),
\end{equation}
\noindent
where $m_0, \, m_1$ and $n_0, \, n_1, \, n_2, \, n_3$ are random samples from $[0,1)$ and $[0,1)\times\pi$, respectively.

All models consist of five hidden layers. 
We consider a uniform Cartesian grid with $h = 2/n$ in both $x$ and $y$ axis directions, and take the standard right isosceles triangle split yielding a triangulation of $\Omega$ for a conforming piecewise linear finite element method. Corresponding to the mesh resolutions of $n=16,\ 32$, and $64$, the connectivity is $C_{\ell}=1$, $C_{\ell}=2$, and $C_{\ell}=3$ for each respective resolution. 
We determined these connectivity values empirically by testing increasing connectivity levels and selecting those that provide stable training and accurate predictions. To ensure a fair comparison, each model was trained for 10,000 epochs using the same optimizer and learning rate scheduler. 

\begin{table}[t]
\centering 
\caption{Comparison of the baseline FEONet and the sparse FEONet models across three mesh resolutions for the 2D advection-diffusion-reaction equation. The values outside parentheses represent the error between the model prediction and the reference solution, the values inside parentheses represent the error between the finite element solution at resolution $n$ and the reference solution.}
\label{tab:result_adr_table}  

\resizebox{\textwidth}{!}{
\begin{tabular}{@{} c l c c c c c c @{}}
    \toprule
    \textbf{\(n\)} &
    \textbf{Connection} &
    \textbf{\# Params} &
    \textbf{Memory(MB)} &
    \textbf{\%} &
    \textbf{Loss} &
    \textbf{Rel. $L^2$ Err.} &
    \textbf{Rel. $H^1$ Semi Err.} \\ 
    \midrule
    
    \multirow{2}{*}{$16$}
    & Dense & 305{,}100 & 1.2 & 100 & $\mathbf{1.44\times10^{-7}}$ & 0.0676 (0.0675) & \textbf{0.4482} (0.4482) \\
    & \textbf{Sparse ($C_\ell =1$)} & \textbf{10{,}092} & \textbf{0.0404} & \textbf{3.31} & $1.46\times10^{-6}$ & 0.0676 (0.0675) & 0.4483 (0.4482) \\
    \midrule
    
    \multirow{2}{*}{$32$}
    & Dense & 5{,}546{,}892  & 22.2 & 100 & $1.11\times10^{-5}$ & 0.0195 (0.0194) & 0.2519 (0.2519) \\
    & \textbf{Sparse ($C_\ell =2$)} & \textbf{108{,}000} & \textbf{0.4320} & \textbf{1.95} & $\mathbf{1.25\times10^{-7}}$ & \textbf{0.0194} (0.0194) & 0.2519 (0.2519) \\
    \midrule
    
    \multirow{2}{*}{$64$}
    & Dense & 94{,}541{,}580 & 378.2 & 100 & $1.19\times10^{-6}$ & 0.0074 (0.0051) & 0.1314 (0.1304) \\
    & \textbf{Sparse ($C_\ell = 3$)} & \textbf{863{,}088} & \textbf{3.5}  & \textbf{0.91} & $\mathbf{2.81\times10^{-8}}$ & \textbf{0.0052} (0.0051) & \textbf{0.1305} (0.1304) \\
    \bottomrule
    
    \end{tabular}
    }
\end{table}

\begin{figure}[t]
    \centering
    \includegraphics[width=\textwidth]{sparse_c5_adr_64.png}
    \caption{
    Visualization of the 2D advection-diffusion-reaction problem results on mesh resolution \(n=64\): The figure displays the input external forcing term $f(x,y)$, the FEM solution $u_h$, Sparse FEONet prediction $u_{\rm{pred}}$, and the absolute error $|u_{\rm{pred}} - u_h|$. 
    }
    \label{fig:adr_scnn_plot}
\end{figure}

Table \ref{tab:result_adr_table} shows the number of parameters, memory, weak form loss, relative \(L^2\) error, and relative \(H^1\) semi-norm error on the test set. The error and loss values are averaged over 3000 data samples per epoch. The sparse neural network performs comparably to, or even better than the dense network with up to about 99\% fewer parameters. In the case of the coarse grid ($n=16$), the errors of both models are similar to those of the finite element method, which is the value in the parentheses. As the resolution increases, the sparse model shows better convergence. At $n=64$, the dense model achieves 0.0074 in relative $L^2$ error, while the sparse model achieves 0.0052, which is close to the accuracy of the finite element solution at that resolution. Figure \ref{fig:adr_scnn_plot} shows the input function, the finite element solution, the sparse FEONet prediction, and the absolute error between the finite element solution and the prediction in \( n = 64 \). 

\subsection{Helmholtz equation}

Some problems require high-resolution solutions with many basis functions due to stiffness or highly oscillatory behavior. In such regimes, the original FEONet often faces substantial computational difficulties, whereas our proposed method can predict solutions effectively. To illustrate this point, we consider the 2D Helmholtz equation following the formulation in \cite{Mcclenny2023}, given by
\begin{equation}
    \begin{aligned}
        \Delta u(x,y) + k^2 u(x,y) &=q(x,y) 
        && (x,y) \in \Omega, \\
        u(x,y) &= g(x,y) && (x,y) \in \partial \Omega,
    \end{aligned}
\end{equation}

\noindent
where \( \Omega=[-1, 1]^2\). We construct data using the manufactured solution. More precisely, we prescribe the exact solution
\begin{equation}
    u(x,y)=\sin(a_1 \pi x)\sin(a_2 \pi y),
\end{equation}
and obtain the corresponding forcing term $q(x,y)$ and boundary data $g(x,y)$ given by
\begin{equation}
    \begin{aligned}
        q(x,y) 
        &= - (a_1 \pi)^2 \sin(a_1 \pi x)\sin(a_2 \pi y) 
        - (a_2 \pi)^2 \sin(a_1 \pi x)\sin(a_2 \pi y) 
        + k^2 \sin(a_1 \pi x)\sin(a_2 \pi y), \\
        g(x,y) &=u\big|_{\partial \Omega},
    \end{aligned}
\end{equation}
with the random samples \(a_1, \, a_2 \in [1, 10),\) where we fixed $k = 1$.

To make the problem more challenging, we sampled relatively large values of $a_1$, and $a_2$. This produces high oscillations, which require fine-mesh resolutions for accurate solution prediction. This setting allows us to evaluate the model's capability in this high-resolution regime.

\begin{table}[t]
\centering 
\caption{Comparison of the baseline FEONet and the sparse FEONet models across four mesh resolutions for the 2D Helmholtz equation. The values outside parentheses represent the error between the model prediction and the reference solution, the values inside parentheses represent the error between the finite element solution at resolution $n$ and the reference solution.}
\label{tab:result_helmholtz_table}  

\resizebox{\textwidth}{!}{
\begin{tabular}{@{} c l c c c c c c @{}}
    \toprule
    \textbf{\(n\)} &
    \textbf{Connection} &
    \textbf{\# Params} &
    \textbf{Memory(MB)} &
    \textbf{\%} &
    \textbf{Loss} &
    \textbf{Rel. $L^2$ Err.} &
    \textbf{Rel. $H^1$ Semi Err.} \\ 
    \midrule
    
    $16$ & Dense & 305{,}100 & 1.2 & 100 & 54.9847 & 0.9720 (0.7708) & 0.9827 (0.8113) \\
    \midrule
    
    \multirow{2}{*}{$32$}
    & Dense & 5{,}546{,}892  & 22.2 & 100 & 888.0010 & 0.9619 (0.3902) & 1.0022 (0.4994) \\
    & \textbf{Sparse ($C_\ell =5$)} & \textbf{451{,}752} & \textbf{1.80} & \textbf{8.14} & \textbf{0.0787} & \textbf{0.3940} (0.3902) & \textbf{0.5012} (0.4994) \\
    \midrule

    \multirow{2}{*}{$64$}
    & Dense & 94{,}541{,}580 & 378.2 & 100 & 598.9622 & 0.9742 (0.1381) & 1.0560 (0.2650) \\
    & \textbf{Sparse ($C_\ell =7$)} & \textbf{3,635,940} & \textbf{14.56}  & \textbf{3.85} & \textbf{0.0036} & \textbf{0.1411} (0.1381) & \textbf{0.2673} (0.2650)  \\
    \midrule
    
    \multirow{2}{*}{$128$}
    & Dense & 1{,}560{,}964{,}620 & 6{,}200.0 & 100 & - & - & - \\
    & \textbf{Sparse ($C_\ell =10$)} & \textbf{29{,}824{,}248} & \textbf{119.3}  & \textbf{1.91} & \textbf{0.0004} & \textbf{0.0414} (0.0401) & \textbf{0.1343} (0.1334)  \\
    \bottomrule
    
    \end{tabular}
    }
\end{table}

\begin{figure}[t]
    \centering
    \includegraphics[width=\textwidth]{sparse_c10_helmholtz_128.png}
    \caption{
    Visualization of the 2D Helmholtz problem results on mesh resolution \(n=128\): The figure displays the input external forcing term $f(x,y)$, the FEM solution $u_h$, on the same mesh, the sparse FEONet prediction $u_{\rm{pred}}$, and the absolute error $|u_{\rm{pred}} - u_h|$. 
    }
    \label{fig:helmholtz_scnn_plot}
\end{figure}

We conducted the experiments using the same setting as in the advection-diffusion-reaction equation, except that each model was trained for longer epochs because of the increased difficulty caused by the nonhomogeneous Dirichlet boundary conditions. We tested the model on mesh resolutions, $n=16,\ 32,\ 64$, and $128$. The sparse model uses the same local connectivity strategy as before with $C_{\ell}=5$ for $n=32$, $C_{\ell}=7$ for $n=64$, and $C_\ell=10$ for $ n=128$. Because the solution exhibits high oscillations, the coarse grid cannot capture the overall behavior of the solution. Although finer mesh resolutions can resolve these oscillations, the dense model has significant optimization difficulties and fails to converge in this regime. Also, for $n=128$, the number of parameters in the dense model results in the GPU memory overflow, which prevented the experiment from being conducted. However, the sparse model converges well on $n=32, 64$, and even on the high-resolution case $n=128$. On $n=128$, our model achieves stable convergence with only about 1.91\% of the parameters required by the dense model. Table \ref{tab:result_helmholtz_table} summarizes the results, and Figure \ref{fig:helmholtz_scnn_plot} shows the plot of the prediction of the sparse model at $n=128$.

\begin{table}[t!]
\centering 
\caption{Comparison of the baseline FEONet and the sparse FEONet models across three mesh resolutions for the 1D nonlinear Burgers' equation. The values outside parentheses represent the error between the model prediction and the reference solution, the values inside parentheses represent the error between the finite element solution at resolution $n$ and the reference solution.}
\label{tab:result_nb_table}  

\resizebox{\textwidth}{!}{
\begin{tabular}{@{} c l c c c c c c c @{}}
    \toprule
    \textbf{\(n\)} &
    \textbf{Connection} &
    \textbf{\# Params} &
    \textbf{Memory(MB)} &
    \textbf{\%} &
    \textbf{Loss} &
    \textbf{Rel. $L^2$ Err.} &
    \textbf{Rel. $H^1$ Semi Err.} \\ 
    \midrule
    
    \multirow{2}{*}{$64$}
    & Dense & 24{,}192 & 0.0968 & 100 & \textbf{0.00030} & \textbf{0.0072} (0.0039) & 0.1123 (0.1121) \\
    & \textbf{Sparse ($C_\ell =8$)} & \textbf{6{,}372} & \textbf{0.0255} & \textbf{26.34} & 0.00043 & 0.0104 (0.0039) & 0.1123 (0.1121) \\
    \midrule
    
    \multirow{2}{*}{$128$}
    & Dense & 97{,}536 & 0.3901 & 100 & 0.00602 & 0.0102 (0.0010) & 0.0569 (0.0558) \\
    & \textbf{Sparse ($C_\ell =13$)} & \textbf{20{,}244} & \textbf{0.0810} & \textbf{20.76} & \textbf{0.00379} & \textbf{0.0035} (0.0010) & \textbf{0.0561} (0.0558) \\
    \midrule
    
    \multirow{2}{*}{$256$}
    & Dense & 391{,}680 & 1.6 & 100 & 0.04933 & 0.1194 (0.0002) & 0.1930 (0.0273)\\
    & \textbf{Sparse ($C_\ell =30$)} & \textbf{89{,}280} & \textbf{0.3571}  & \textbf{22.79} & \textbf{0.02336} & \textbf{0.0056} (0.0002) & \textbf{0.0288} (0.0273) \\
    \bottomrule
    
\end{tabular}
}
\end{table}

\subsection{Nonlinear Burgers' equation}

We next show that our method also performs well on nonlinear equations. To do this, we tested our model on the 1D nonlinear Burgers' equation to assess the model's capability in handling nonlinear terms. The equation can be written as
\begin{equation}
    \begin{aligned}
     -0.1u_{xx}+uu_x &= f(x) && x \in [-1,1], \\
     u(x) &= 0 && x \in \{-1, 1\},
\end{aligned}
\end{equation}
where $f(x)=m_0\sin(n_0x)+m_1\cos(n_1x)$ with the random samples $m_0, \, m_1 \in [0,1)$ and  $n_0, \, n_1\in [0,1)\times\pi$. For the sparse model, we set the connectivity to $C_\ell=8$ for $n=64$, $C_\ell=13$ for $n=128$, and $C_\ell=30$ for $n=256$. 

\begin{figure}[t]
    \centering
    \includegraphics[width=\textwidth]{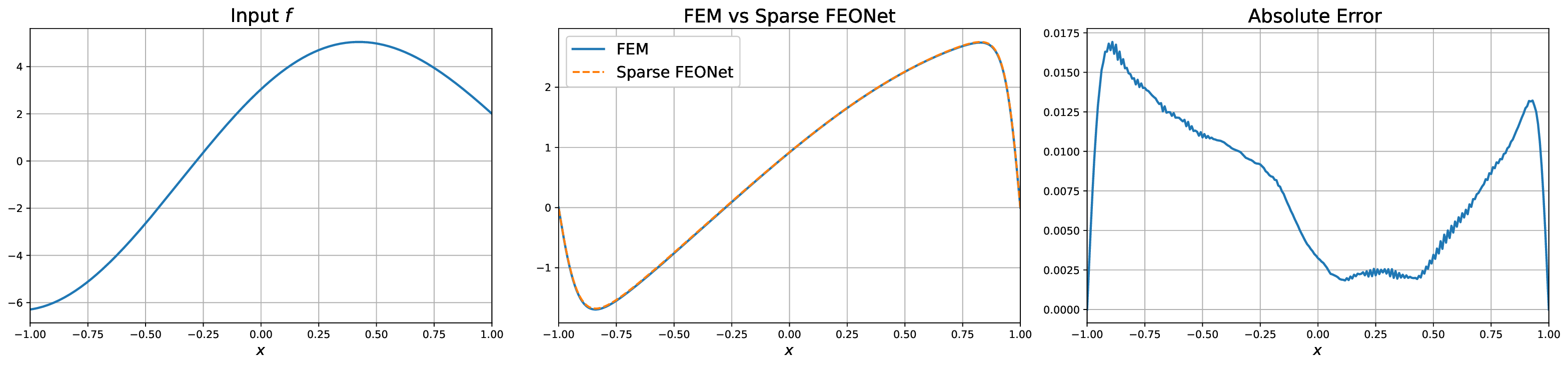}
    \caption{
    Visualization of the results on the 1D nonlinear Burgers' problem on mesh resolution \(n=256\): The figure displays the input forcing term $f(x)$, the FEM solution $u_h$, Sparse FEONet prediction $u_{\rm{pred}}$, and the absolute error $|u_{\rm{pred}} - u_h|$. The mean relative $L^2$ error on the test set is $0.0056$ using $22.79\%$ of the parameters.
    }
    \label{fig:nb_scnn_plot}
\end{figure}

Table \ref{tab:result_nb_table} presents the quantitative results, and Figure \ref{fig:nb_scnn_plot} shows the prediction of the sparse model at $n=256$.
The sparse model achieves convergence in the relative $H^1$ semi-norm error that is comparable to that of the finite element solution across all resolutions.
In contrast, the dense model shows a noticeable difference from the FEM as $n$ increases.
This behavior suggests that the optimization of the dense model becomes more difficult at higher resolutions, and that the training may not be sufficient within the fixed number of epochs.
For the relative $L^2$ error, the sparse model still achieves small errors, but a larger gap with respect to the finite element solution is observed.
This difference is likely due to the weak-form-residual-based training, which directly controls the gradient of the solution and therefore favors accuracy in the $H^1$ semi-norm, while the $L^2$ error is only indirectly controlled.
In addition, the nonlinear and advective nature of the Burgers' equation can make the $L^2$ error more sensitive to small shifts in the solution profile.
Compared to the two-dimensional case, the ratio of parameters between the dense and sparse models is relatively large in one dimension. However, since each node has fewer neighboring degrees of freedom, the overall reduction in the number of parameters becomes less significant. These results indicate that the proposed sparse model remains effective for problems with nonlinear terms.

\subsection{Irregular triangular meshes and complex domain}

To evaluate the performance of the proposed sparse model for a general case, we test the model on irregular triangular meshes and examine whether the sparse connectivity strategy remains effective when the mesh structure is not aligned with a regular Cartesian grid. To do this, we solve the 2D Poisson equation with the homogeneous Dirichlet boundary condition and the forcing terms given by \eqref{f_input}. Using FEniCS, we generate irregular triangular meshes on a standard square domain and on a square domain with a circular hole. The two domains have 919 and 679 degrees of freedom, respectively. Except for the structure of the mesh and the domain geometry, all training settings are kept consistent with the previous experiments.

Table \ref{tab:result_poisson_diff_geometry_table} shows the experimental results. In both cases, the sparse model converges accurately with fewer parameters compared to the dense model. These results indicate that the local connectivity-based sparse structure remains effective on unstructured meshes and complex geometries. Figure \ref{fig:circlehole_poisson_scnn_plot} visualizes the prediction of the sparse model on the domain with a circular hole.

\begin{table}[t]
\centering 
\caption{
Comparison of the baseline FEONet and the sparse FEONet models across different mesh and domain geometries on the 2D Poisson equation. The relative \(L^2\) error and the relative \(H^1\) semi-norm error are computed on the test set using a reference finite element solution at the same mesh resolution.}
\label{tab:result_poisson_diff_geometry_table}  

\resizebox{\textwidth}{!}{
\begin{tabular}{@{} c l c c c c c c @{}}
    \toprule
    \textbf{Domain Type} &
    \textbf{Connection} &
    \textbf{\# Params} &
    \textbf{Memory(MB)} &
    \textbf{\%} &
    \textbf{Loss} &
    \textbf{Rel. $L^2$ Err.} &
    \textbf{Rel. $H^1$ Semi Err.} \\ 
    \midrule
    
    \multirow{2}{*}{Irregular Triangular mesh}
    & Dense & 5{,}072{,}880 & 20.30 & 100 & $3.42\times10^{-6}$ & 0.0011 & 0.0019 \\
    & \textbf{Sparse ($C_\ell =5$)} & \textbf{483{,}300} & \textbf{1.90} & \textbf{9.53} & $\mathbf{5.10\times10^{-7}}$ & \textbf{0.0003} & \textbf{0.0004} \\
    \midrule
    
    \multirow{2}{*}{Circle hole}
    & Dense & 2{,}770{,}320  & 11.1 & 100 & $2.50\times10^{-6}$ & 0.0016 & 0.0025 \\
    & \textbf{Sparse ($C_\ell =3$)} & \textbf{135{,}372} & \textbf{0.5415} & \textbf{4.89} & $\mathbf{2.02\times10^{-7}}$ & \textbf{0.0010} & \textbf{0.0011} \\
    \bottomrule
    
    \end{tabular}
    }
\end{table}

\begin{figure}[t]
    \centering
    \includegraphics[width=\textwidth]{sparse_c3_poisson_circlehole.png}
    \caption{
    Visualization of the 2D Poisson problem on circle-hole domain: External forcing term $f(x,y)$ for input, FEM solution $u_h$ on the same mesh, sparse FEONet prediction $u_{\rm{pred}}$, and the absolute error $|u_{\rm{pred}} - u_h|$. 
    }
    \label{fig:circlehole_poisson_scnn_plot}
\end{figure}

\begin{table}[t]
\centering 
\caption{Results of the sparse FEONet models across three mesh resolutions for the 2D Stokes equation. The values outside parentheses represent the error between the model prediction and the reference solution, the values inside parentheses represent the error between the finite element solution at resolution $n$ and the reference solution.}
\label{tab:result_stokes_table}  

\resizebox{\textwidth}{!}{
\begin{tabular}{@{} c l c c c c c c c @{}}
    \toprule
    \textbf{\(n\)} &
    \textbf{Connection} &
    \textbf{\# Params} &
    \textbf{Memory(MB)} &
    \textbf{\%} &
    \textbf{Loss} &
    \textbf{Rel. $L^2$ Err. of $p$} &
    \textbf{Rel. $H^1$ Semi Err. of $\mathbf{u}$} \\ 
    \midrule
    
    $8$ & Sparse ($C_\ell =3$) & 541{,}548 & 2.2 & 31.83 & $2.25\times10^{-6}$ & 0.0321 (0.0164) & 0.0955 (0.0751) \\
    \midrule
    
    $16$ & Sparse ($C_\ell =5$) & 6{,}708{,}756 & 26.8 & 22.84 & $1.35\times10^{-5}$ & 0.0153 (0.0044) & 0.0311 (0.0210) \\
    \midrule
    
    $32$ & Sparse ($C_\ell =6$) & 44{,}889{,}984 & 179.6 & 9.18 & $2.49\times10^{-6}$ & 0.0530 (0.0012) & 0.0647 (0.0058) \\
    \bottomrule
    
\end{tabular}
}
\end{table}

\subsection{Stokes equations}

Finally, we extend our evaluation to the 2D Stokes equations:
\begin{equation}
    \begin{aligned}
         -\Delta \mathbf{u}(x,y) + \nabla p(x,y) &= \mathbf{f}(x,y) && (x,y) \in \Omega, \\
         \nabla \cdot \mathbf{u}(x,y) &= 0 && (x,y) \in \Omega, \\
         \mathbf{u}(x,y) &= \mathbf{0} && (x,y) \in \partial \Omega,
    \end{aligned}
\end{equation}
where \( \Omega=[-1, 1]^2\), \( \mathbf{u}=(u_1, u_2)\) represents the velocity field, and $p$ denotes the kinematic pressure. 
To ensure uniqueness of the pressure, we also impose the zero-mean condition, $\int_\Omega p\ {\rm{d}}x = 0$.
The forcing term $\mathbf{f}=(f_1, f_2)$ is of the form
\begin{equation}
    f_i(x,y)=m_{i,0}\sin(n_{i,0}x+n_{i,1}y)+m_{i,1}\cos(n_{i,2}x+n_{i,3}y),
\end{equation}
\noindent
with the random samples \(m_{i,0}, \, m_{i,1} \in [1,2],\, n_{i,0}, n_{i,1}, n_{i,2}, n_{i,3} \in [0,1)\times\pi \) for \(i=1,2\).

We use $P_2$ elements for the velocity components and $P_1$ elements for the pressure field. This significantly increases the total number of degrees of freedom compared to the previous problems, making the learning process much more challenging. The discrete finite element system consists of three components related to the momentum equation $-\Delta \mathbf{u} + \nabla p = \mathbf{f}$, the incompressibility condition $\nabla\cdot\mathbf{u} = 0$, and the zero-mean pressure condition $\int_\Omega p\ {\rm{d}}x = 0$.
We define the training loss as a weighted sum of the $l_2$ norms of the residuals of these three components.
We use the constant weights to balance the scales of the residual terms.

We conducted the experiments on mesh resolutions of $n=8, 16$ and $32$. Due to the high computational and memory requirements for this problem, training a dense model becomes impractical. Therefore, we focus on evaluating whether the proposed sparse FEONet converges well in this problem. The results are summarized in Table \ref{tab:result_stokes_table}. The sparse model shows high computational efficiency. At $n=32$, it requires only $9.18 \%$ of the parameters relative to a dense model. At $n=16$, the relative $H^1$ semi-norm error for the velocity field is $0.0311$, which is close to the error between the reference solution at $n=128$ and the finite element solution at $n=16$. At the finer resolution, $n=32$, the problem becomes more challenging due to the rapid increase in the number of degrees of freedom and physical constraints. To train the model for this problem, we reduced the learning rate and extended the training epochs. Although this improves convergence, the prediction errors increase moderately compared with those on the coarse mesh. Nevertheless, the sparse model still converges reliably and achieves acceptable accuracy under this challenging setting. The same model architecture is used, and the weight settings are the same across the mesh resolutions without additional hyperparameter tuning. Figure \ref{fig:stokes_scnn_plot} visualizes the prediction of the sparse model at $n=32$.

\begin{figure}[t]
    \centering
    \includegraphics[width=\textwidth]{sparse_c6_stokes_32.png}
    \caption{
    Visualization of the 2D Stokes problem on mesh resolution \(n=32\): External forcing term $f(x,y)$ for input, FEM solutions $\mathbf{u}_h$ and $p_h$ on the same mesh, the corresponding sparse FEONet predictions, and the absolute errors for the velocity components. 
    }
    \label{fig:stokes_scnn_plot}
\end{figure}

\section{Concluding Remarks}\label{sec:conclusion}
In this work, we consider the FEONet, an unsupervised operator-learning framework for parametric PDEs based on the classical FEM. While FEONet demonstrates strong accuracy and robustness over a wide range of problems, its computational burden grows rapidly with mesh refinement, and its performance can degrade as the number of elements increases, which limits its applicability to large-scale settings. To overcome these challenges, we proposed a new sparse network architecture guided by the intrinsic locality and connectivity structure of finite elements. The proposed design significantly reduces the computational cost and improves memory efficiency, while preserving accuracy comparable to the original FEONet across extensive numerical experiments. Beyond empirical validation, we established theoretical guarantees: we proved that the sparse architecture can approximate the target operator effectively, and we provided a stability analysis that supports reliable training and prediction. Taken together, these results suggest that incorporating finite-element structure into operator networks is a principled and practical strategy for scaling FEONet to finer discretizations.

Looking ahead, an important future direction is to deepen the theoretical understanding of the proposed method by exploiting properties that are specific to the sparse FEONet and are not present in the original FEONet. In particular, by leveraging the finite-element-induced sparsity pattern and its associated locality structure, it should be possible to carry out a convergence analysis that more directly reflects the role of the new architecture, clarifying how the sparse connectivity affects approximation error and generalization error as the mesh is refined. Such a result would not only strengthen the theoretical foundation of the method but also provide principled guidelines for designing and tuning sparse architectures in large-scale regimes.
Another promising avenue is to move beyond multi-layer perceptrons and develop appropriate sparse designs for more general neural network classes. Constructing FEM-relevant sparse structures for broader architectures may significantly widen the applicability of the approach. Establishing conditions under which such sparse generalizations preserve approximation power and stability, while maintaining computational advantages, would be an important and intriguing step toward making the proposed framework a more universal tool for operator learning in parametric PDEs.

\bibliography{references}
\bibliographystyle{abbrv}

\renewcommand{\thetable}{A.\arabic{table}}
\setcounter{table}{0}

\section*{Appendix}

In this appendix section, we investigate the effect of connectivity $C_\ell$ for both the advection-diffusion-reaction equation and the Helmholtz equation.
In all experiments, we consider a uniform mesh on the unit square domain described in Section~\ref{sec:method}, and we report results for mesh resolutions $n = 32$ and $n = 64$.
The number of hidden layers is fixed to five throughout all experiments.
As illustrated in Figure~\ref{fig:propagation}, information from a single node can propagate across the entire domain through successive layers, even with local connectivity.
Based on this observation, one can estimate the minimum value of $C_\ell$ required for global information propagation.
The resulting values are $C_\ell=3$ for $n=16$, $C_\ell=5$ for $n=32$, $C_\ell=11$ for $n=64$, and $C_\ell=21$ for $n=128$.

Tables~\ref{tab:adr_connection_test_32}--\ref{tab:helm_connection_test_64} summarize the effect of the connectivity parameter $C_\ell$ on the performance of the sparse model.
Tables~\ref{tab:adr_connection_test_32} and \ref{tab:adr_connection_test_64} show the results for the advection-diffusion-reaction equation at mesh resolutions $n = 32$ and $n = 64$, respectively, while Tables~\ref{tab:helm_connection_test_32} and \ref{tab:helm_connection_test_64} present the corresponding results for the Helmholtz equation.
In all cases, the relative errors are evaluated on the test set using the finite element solution $u_h$ computed on the same mesh.
The results show that increasing the connectivity generally improves the accuracy of the sparse model, although accurate approximations are already achieved at connectivity levels below the corresponding estimated minimum connectivity.
In addition, the Helmholtz equation, which involves higher-frequency solution components, requires larger connectivity values to achieve comparable accuracy.
These results indicate that the minimum connectivity required for accurate approximation increases with both problem complexity and mesh resolution.

\begin{table}[t]
\centering 
\caption{Convergence of the relative error of the sparse model with varying connectivity $C_\ell$ for the advection-diffusion-reaction equation at mesh resolution $n=32$. The relative error is evaluated on the test set using the finite element solution on the same mesh. The theoretical minimum connectivity for global information propagation is $C_\ell=5$.}
\label{tab:adr_connection_test_32}  

\begin{tabular*}{0.8\textwidth}{@{\extracolsep{\fill}} c c c c @{}}
    \toprule
    \textbf{Connection ($C_\ell$)} &
    \textbf{\# Params} &
    \textbf{\%} &
    \textbf{Rel. Err. of Test set} \\ 
    \midrule
    
    Dense & 5{,}546{,}892 & 100 & 0.0011 \\
    \midrule
    
    $C_\ell=1$ & 44{,}652 & 0.80 & 0.0019 \\
    $C_\ell=2$ & 108{,}000 & 1.95 & 0.0007 \\
    $C_\ell=3$ & 198{,}768 & 3.58 & 0.0006 \\
    $C_\ell=4$ & 314{,}232 & 5.67 & 0.0006 \\
    $C_\ell=5$ & 451{,}752 & 8.14 & 0.0005 \\
    \bottomrule
    
\end{tabular*}
\end{table}

\begin{table}[t]
\centering 
\caption{Convergence of the relative error of the sparse model with varying connectivity $C_\ell$ for the advection-diffusion-reaction equation at mesh resolution $n=64$. The relative error is evaluated on the test set using the finite element solution on the same mesh. The theoretical minimum connectivity for global information propagation is $C_\ell=11$.}
\label{tab:adr_connection_test_64}

\begin{tabular*}{0.8\textwidth}{@{\extracolsep{\fill}} c c c c @{}}
    \toprule
    \textbf{Connection ($C_\ell$)} &
    \textbf{\# Params} &
    \textbf{\%} &
    \textbf{Rel. Err. of Test set} \\ 
    \midrule
    
    Dense & 94{,}541{,}580 & 100 & 0.0052 \\
    \midrule
    
    $C_\ell=1$ & 187{,}500 & 0.20 & 0.0694 \\
    $C_\ell=2$ & 461{,}280 & 0.49 & 0.0023 \\
    $C_\ell=3$ & 863{,}088 & 0.91 & 0.0011 \\
    $C_\ell=4$ & 1{,}387{,}128 & 1.47 & 0.0009 \\
    $C_\ell=5$ & 2{,}027{,}688 & 2.14 & 0.0008 \\
    $C_\ell=6$ & 2{,}779{,}140 & 2.94 & 0.0008 \\
    $C_\ell=7$ & 3{,}635{,}940 & 3.85 & 0.0007 \\
    $C_\ell=8$ & 4{,}592{,}628 & 4.86 & 0.0005 \\
    $C_\ell=9$ & 5{,}643{,}828 & 5.97 & 0.0005 \\
    $C_\ell=10$ & 6{,}784{,}248 & 7.18 & 0.0006 \\
    $C_\ell=11$ & 8{,}008{,}680 & 8.47 & 0.0006 \\
    \bottomrule
    
\end{tabular*}
\end{table}

\begin{table}[t]
\centering 
\caption{Convergence of the relative error of the sparse model with varying connectivity $C_\ell$ for the Helmholtz equation at mesh resolution $n=32$. The relative error is evaluated on the test set using the finite element solution on the same mesh. The theoretical minimum connectivity for global information propagation is $C_\ell=5$.}
\label{tab:helm_connection_test_32}  

\begin{tabular*}{0.8\textwidth}{@{\extracolsep{\fill}} c c c c @{}}
    \toprule
    \textbf{Connection ($C_\ell$)} &
    \textbf{\# Params} &
    \textbf{\%} &
    \textbf{Rel. Err. of Test set} \\ 
    \midrule
    
    Dense & 5{,}546{,}892 & 100 & 0.8993 \\
    \midrule
    
    $C_\ell=1$ & 44{,}652 & 0.80 & 0.3252 \\
    $C_\ell=2$ & 108{,}000 & 1.95 & 0.1434 \\
    $C_\ell=3$ & 198{,}768 & 3.58 & 0.0914 \\
    $C_\ell=4$ & 314{,}232 & 5.67 & 0.0625 \\
    $C_\ell=5$ & 451{,}752 & 8.14 & 0.0440 \\
    \bottomrule
    
\end{tabular*}
\end{table}

\begin{table}[t]
\centering 
\caption{Convergence of the relative error of the sparse model with varying connectivity $C_\ell$ for the Helmholtz equation at mesh resolution $n=64$. The relative error is evaluated on the test set using the finite element solution on the same mesh. The theoretical minimum connectivity for global information propagation is $C_\ell=11$.}
\label{tab:helm_connection_test_64}

\begin{tabular*}{0.8\textwidth}{@{\extracolsep{\fill}} c c c c @{}}
    \toprule
    \textbf{Connection ($C_\ell$)} &
    \textbf{\# Params} &
    \textbf{\%} &
    \textbf{Rel. Err. of Test set} \\ 
    \midrule
    
    Dense & 94{,}541{,}580 & 100 & 0.9585 \\
    \midrule
    
    $C_\ell=1$ & 187{,}500 & 0.20 & 0.1554 \\
    $C_\ell=2$ & 461{,}280 & 0.49 & 0.0519 \\
    $C_\ell=3$ & 863{,}088 & 0.91 & 0.0266 \\
    $C_\ell=4$ & 1{,}387{,}128 & 1.47 & 0.0290 \\
    $C_\ell=5$ & 2{,}027{,}688 & 2.14 & 0.0247 \\
    $C_\ell=6$ & 2{,}779{,}140 & 2.94 & 0.0220 \\
    $C_\ell=7$ & 3{,}635{,}940 & 3.85 & 0.0209 \\
    $C_\ell=8$ & 4{,}592{,}628 & 4.86 & 0.0221 \\
    $C_\ell=9$ & 5{,}643{,}828 & 5.97 & 0.0239 \\
    $C_\ell=10$ & 6{,}784{,}248 & 7.18 & 0.0253 \\
    $C_\ell=11$ & 8{,}008{,}680 & 8.47 & 0.0268 \\
    \bottomrule
    
\end{tabular*}
\end{table}

\end{document}